\numberwithin{equation}{section}
\newcommand{\on}{\operatorname}
\newcommand{\mf}{\mathfrak}
\newcommand{\mc}{\mathcal}
\newcommand{\fun}{\mathbb{F}_1}
\newcommand{\mspec}{\on{MSpec}}
\newcommand{\mproj}{\on{MProj}}
\newcommand{\p}{\mathfrak{p}}
\newcommand{\nc}{\newcommand}
\nc{\h}{\mathfrak{h}}
\nc{\g}{\mathfrak{g}}
\nc{\n}{\mathfrak{n}}
\nc{\ch}{\on{CH}}
\nc{\wt}{\widetilde}
\nc{\F}{\mc{F}}
\nc{\C}{\mc{C}}
\nc{\M}{\on{M}}
\nc{\T}{\mc{T}}
\nc{\G}{\mc{G}}
\nc{\ov}{\overline}
\nc{\VFun}{\on{Vect}(\fun)}
\nc{\FF}{\mathbb{F}}
\nc{\Amod}{$A$-{mod}}
\nc{\Pone}{\mathbb{P}^1}
\nc{\Aone}{\mathbb{A}^1}
\renewcommand{\fun}{\mathbb{F}_1}
\renewcommand{\mf}{\mathfrak}
\nc{\slthat}{\widehat{\mf{sl}}_2}
\renewcommand{\a}{\mathfrak{a}}
\renewcommand{\p}{\mathfrak{p}}
\nc{\spec}{\on{MSpec}}
\nc{\Msch}{\mc{M}sch}
\nc{\mt}{\widetilde{M}}
\theoremstyle{plain}
\newtheorem{theorem}{Theorem}[section]
\newtheorem{thm-defn}{Theorem/Definition}[section]
\newtheorem{lemma}{Lemma}[section]
\newtheorem{lem-defn}{Lemma/Definition}[section]
\newtheorem{corollary}{Corollary}[section]
\theoremstyle{definition}
\newtheorem{rmk}{Remark}[section]
\newtheorem{example}{Example}
\newtheorem{definition}{Definition}
\newtheorem*{theorem*}{Theorem}
\newtheorem*{corollary*}{Corollary}
\theoremstyle{remark}
\numberwithin{example}{section}
\numberwithin{definition}{section}
\begin{document}

 \title{Quasicoherent sheaves on projective schemes over $\fun$}
  \author{ Oliver Lorscheid { and } Matt Szczesny}
  \date{}

  \maketitle

\begin{abstract}
Given a graded monoid $A$ with $1$, one can construct a projective monoid scheme $\mproj(A)$ analogous to $\on{Proj}(R)$ of a graded ring $R$. This paper is concerned with the study of quasicoherent sheaves on $\mproj(A)$, and we prove several basic results regarding these. We show that: 
\begin{enumerate}
\item every quasicoherent sheaf $\F$ on $\mproj(A)$ can be constructed from a graded $A$--set in analogy with the construction of quasicoherent sheaves on $\on{Proj}(R)$ from graded $R$--modules
\item if $\F$ is coherent on $\mproj(A)$, then $\F(n)$ is globally generated for large enough $n$, and consequently, that $\F$ is a quotient of a finite direct sum of invertible sheaves
\item if $\F$ is coherent on $\mproj(A)$, then $\Gamma(\mproj(A), \F)$ is a finite pointed set
\end{enumerate}  
The last part of the paper is devoted to classifying coherent sheaves on $\mathbb{P}^1$ in terms of certain directed graphs and gluing data. The classification of these over $\fun$ is shown to be much richer and combinatorially interesting than in the case of ordinary $\mathbb{P}^1$, and several new phenomena emerge. 
\end{abstract}



\section{Introduction}

The last twenty years have seen the development of several notions of "algebraic geometry over $\fun$". This quest is motivated by a variety of questions in arithmetic, representation theory, algebraic geometry, and combinatorics. Since several surveys \cite{LPL2, Lor3, Th2} of the motivating ideas exist, we will not attempt to sketch them here. One of the simplest approaches to algebraic geometry over $\fun$ is via the theory of monoid schemes, originally developed by Kato \cite{Kato}, Deitmar \cite{D}, and Connes-Consani \cite{CC2}. Here, the idea is to replace prime spectra of commutative rings which are the local building blocks of ordinary schemes, by prime spectra of commutative unital monoids with $0$. One then obtains a topological space $X$ with a structure sheaf of commutative monoids $\mc{O}_X$. In this setting, one can define a notion of \emph{quasicoherent sheaf} on $(X, \mc{O}_X)$, as a sheaf of pointed sets carrying an action of $\mc{O}_X$, which for each affine $U \subset X$ is described by an $\mc{O}_X (U)$--set. Imposing a condition of local finite generation yields a notion of \emph{coherent sheaf}. 

This paper is devoted to the study of quasicoherent and coherent sheaves on projective monoid schemes. Given a graded commutative unital monoid with $0$, $A = \bigoplus_{n \in \mathbb{N}} A_n$, one can form a monoid scheme $\mproj(A)$ in a manner analogous to the $\on{Proj}$ construction in the setting of graded rings. We call such a monoid scheme \emph{projective}. In analogy with the setting of ordinary schemes, we construct a functor:
\begin{align*}
grA-Mod & \rightarrow Qcoh(X)\\
M & \rightarrow \widetilde{M}\\
\end{align*}
from the category of graded (set-theoretic) $A$--modules to the category of quasicoherent sheaves on $\mproj(A)$. It sends $M$ to the quasicoherent sheaf  $\wt{M}$ whose sections over the affine open $\mproj(A_f)$ are $M_{(f)}$ - the degree zero elements of the localization of $M$ with respect to the multiplicative subset generated by $f$.  We prove that every quasicoherent sheaf on $\mproj (A)$ arises via this construction, and that as in the case of ordinary schemes, there is a canonical representative:

\begin{theorem*}[\ref{mainthm1}]
Let $A$ be a graded monoid finitely generated by $A_1$ over $A_0$, and let $X = \mproj(A)$. Given a quasicoherent sheaf $\mc{F}$ on $X$, there exists a natural isomorphism $\beta: \wt{\Gamma_* ( \mc{F})} \simeq \mc{F}$, where $ \Gamma_* ( \mc{F} ) = \oplus_{n \in \mathbb{Z}} \Gamma(\mproj(A), \F(n)) $.
\end{theorem*}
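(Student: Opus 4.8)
The plan is to mimic the classical argument (e.g. Hartshorne II.5.15) while tracking where the pointed-set combinatorics over $\fun$ forces modifications. First I would construct the map $\beta$ locally. Fix $f \in A_1$ and work over the basic open $D_+(f) = \mproj(A_f) \simeq \Spec(A_{(f)})$. An element of $\Gamma_*(\F)$ of degree $n$ is a global section $s \in \Gamma(X, \F(n))$; restricting to $D_+(f)$ and dividing by $f^n$ gives a section of $\F|_{D_+(f)}$, hence a well-defined element of $\widetilde{\Gamma_*(\F)}(D_+(f)) = \bigl(\Gamma_*(\F)\bigr)_{(f)}$. This is $A_{(f)}$-linear in the pointed-set sense and compatible with the localization maps on overlaps $D_+(f) \cap D_+(g) = D_+(fg)$, so by the sheaf axiom the local maps glue to a morphism of quasicoherent sheaves $\beta \colon \widetilde{\Gamma_*(\F)} \to \F$. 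Naturality in $\F$ is immediate from the functoriality of each construction used.

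Next I would reduce checking that $\beta$ is an isomorphism to a statement about the modules $\Gamma_*(\F)_{(f)}$ versus $\F(D_+(f))$ for $f \in A_1$, since $\{D_+(f)\}_{f \in A_1}$ covers $X$ (this uses the hypothesis that $A$ is generated by $A_1$ over $A_0$). The heart is the two claims: (a) every $t \in \F(D_+(f))$ is hit, i.e. $f^n t$ extends to a global section of $\F(n)$ for some $n$; and (b) if a global section of $\F(n)$ vanishes on $D_+(f)$ then it is killed by some power of $f$. Both are proved by the standard two-step argument: first for a single $g \in A_1$, using that $X$ is covered by finitely many $D_+(g_i)$ (finite generation of $A$ over $A_0$ by $A_1$), and that on each affine piece the quasicoherent sheaf is given by an $A_{(g_i)}$-set, so multiplying by a high enough power of $f$ clears denominators; then patching the finitely many exponents into a single $n$. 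In the pointed-set world "clearing denominators" means: if $f^{a}(\text{something}) = f^{b}(\text{something})$ in a localized $A$-set, then already $f^{a+c} = f^{b+c}$ of the un-localized elements agree — this follows from the explicit description of localization of pointed $A$-sets, where $x/f^k = y/f^\ell$ iff $f^m(f^\ell x) = f^m(f^k y)$ for some $m$.

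The main obstacle I anticipate is purely bookkeeping but genuinely different from the ring case: over $\fun$ there is no additive structure, so one cannot take differences of sections, and the usual "glue local sections that agree on overlaps" step must be replaced by the sheaf axiom for pointed sets applied directly, while the "a section vanishing on a cover vanishes" step must be run through the basepoint rather than through subtraction. Concretely, in step (b) one must argue that if $s$ and $s'$ are two global sections of $\F(n)$ that become equal after multiplying by $f^k$ locally on each $D_+(g_i)$, then $f^N s = f^N s'$ globally — there is no $s - s'$ to work with, so the equality must be propagated termwise through the cover and the finitely many local exponents combined. Once this is set up carefully the surjectivity and injectivity of $\beta$ on each $D_+(f)$ follow, and since isomorphy is local on $X$, $\beta$ is an isomorphism of quasicoherent sheaves. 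I would close by remarking that the finiteness hypotheses on $A$ are used exactly twice — to guarantee the cover $\{D_+(g_i)\}$ is finite (so the exponents can be bounded) and to ensure $\Gamma_*(\F)$ is a well-behaved graded $A$-set — and nowhere else.
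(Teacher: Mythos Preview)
Your proposal is correct and follows essentially the same route as the paper: construct $\beta$ locally on each $D_+(f)$ by sending $s/f^n$ to $s|_{D_+(f)} \otimes f^{-n}$, then verify bijectivity via the two extension/annihilation claims you label (a) and (b), which are exactly the content of the paper's Lemma~\ref{sections_extend} (itself reduced to the affine Lemma~\ref{affine_extension}). Your observation that injectivity must be phrased as ``$s_1$ and $s_2$ agree on $D_+(f)$ implies $f^n s_1 = f^n s_2$'' rather than ``$s$ vanishes implies $f^n s = 0$'' is precisely how the paper handles the absence of subtraction---Lemma~\ref{sections_extend}(1) is stated for a pair of sections from the outset.
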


This result allows a purely combinatorial classification of quasicoherent sheaves on $\mproj(A)$ in terms of graded $A$--sets. We use this to obtain $\fun$--analogs of other key  foundational results on quasicoherent  and coherent sheaves on projective schemes, such as the following regarding global finite generation:

\begin{theorem*}[\ref{fg_sheaf}]
Let $A$ be a graded monoid finitely generated by $A_1$ over $A_0$, and $\F$ a coherent sheaf on $X=\mproj(A)$. Then there exists $n_0$ such that $\F(n)$ is generated by finitely many global sections for all $n \geq n_0$. 
\end{theorem*}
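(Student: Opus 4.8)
The plan is to mimic the classical proof (Hartshorne II.5.17) but keep careful track of the pointed-set combinatorics, using Theorem \ref{mainthm1} to reduce everything to graded $A$-sets. First I would invoke Theorem \ref{mainthm1} to write $\F \simeq \wt{M}$ where $M = \Gamma_*(\F)$ is a graded $A$-set. Since $A$ is finitely generated by $A_1$ over $A_0$, choose homogeneous generators $f_1,\dots,f_r \in A_1$; then $X = \mproj(A)$ is covered by the finitely many affine opens $U_i = \mproj(A_{f_i}) \cong \mspec\big((A_{f_i})_0\big)$. The section $\Gamma(U_i,\F) = M_{(f_i)}$ is, by the coherence hypothesis on $\F$, a finitely generated $(A_{f_i})_0$-set: say it is generated by $s_{i,1},\dots,s_{i,k_i}$.

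Next I would clear denominators. Each generator $s_{i,j} \in M_{(f_i)}$ is, by definition of the degree-zero part of the localization, of the form $m_{i,j}/f_i^{e_{i,j}}$ for some homogeneous $m_{i,j} \in M$ of degree $e_{i,j}$ and some $e_{i,j}\ge 0$. Let $n_1 = \max_{i,j} e_{i,j}$. I claim that for every $n \ge n_1$ the twisted sheaf $\F(n)$ has the property that the finitely many global sections $f_i^{n-e_{i,j}}\cdot m_{i,j} \in M_{n} \subseteq \Gamma_*(\F)_n = \Gamma(X,\F(n))$ generate $\F(n)$ on each $U_i$. This is the key local computation: on $U_i$, the section $f_i$ becomes an invertible element of the structure monoid, so multiplication by $f_i^{n}$ gives an isomorphism $\F|_{U_i} \xrightarrow{\sim} \F(n)|_{U_i}$ of sheaves of pointed $\OO_{U_i}$-sets; under this isomorphism the chosen global sections restrict to (unit multiples of) the original generators $s_{i,j}$ of $M_{(f_i)}$, hence generate. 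Because the $U_i$ cover $X$ and there are finitely many of them, collecting the $f_i^{n-e_{i,j}} m_{i,j}$ over all $i,j$ gives a finite set of global sections of $\F(n)$ that generate it everywhere. Taking $n_0 = n_1$ finishes the argument.

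The one point requiring genuine care — and the step I expect to be the main obstacle — is verifying that "generated on each $U_i$ by the restrictions" really does imply "globally generated as a sheaf of $\OO_X$-sets," together with confirming that the twist isomorphism $\F|_{U_i}\cong \F(n)|_{U_i}$ identifies global sections with the asserted local generators. In the ring-theoretic case this is immediate, but over $\fun$ one must check that the sheafification/gluing for pointed $\OO_X$-sets behaves well: the stalk of $\F(n)$ at a point $x \in U_i$ is generated over $\OO_{X,x}$ by the images of the chosen sections, and since generation of a pointed set by a subset is detected stalk-locally (a morphism of sheaves of pointed $\OO_X$-sets is surjective iff it is surjective on stalks), surjectivity of $\bigoplus \OO_X \to \F(n)$ follows. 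I would also double-check the edge behavior when some $A_1$-generator acts with a "zero" (basepoint) value, but this does not affect the localizations $M_{(f_i)}$ where $f_i$ is inverted, so the argument goes through unchanged. Finally, I would remark that combined with the fact (to be used in the next result) that $\Gamma(X,\F)$ is a finite pointed set, one recovers the statement in the abstract that $\F$ is a quotient of a finite direct sum of twists $\OO_X(-n_0)$.
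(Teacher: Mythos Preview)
Your argument is correct and follows essentially the same route as the paper: cover $X$ by the $D_+(f_i)$ for $f_i\in A_1$, take finitely many local generators of $\F$ on each chart, clear denominators to lift them to global sections of a sufficiently high twist, and observe that multiplication by $f_i^n$ is an isomorphism on $D_+(f_i)$ so the lifts still generate. The only difference is in the lifting step: you invoke Theorem~\ref{mainthm1} to write $\F\simeq\wt{\Gamma_*(\F)}$ and then read the $m_{i,j}$ off as honest elements of $M_n=\Gamma(X,\F(n))$, whereas the paper appeals directly to Lemma~\ref{sections_extend}(2) to extend each $f_i^{n_0}s_{ij}$ to a global section of $\F(n_0)$. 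Since Theorem~\ref{mainthm1} is itself proved via Lemma~\ref{sections_extend}, the two arguments are equivalent; the paper's version is marginally more direct. Your closing remark is slightly off: Corollary~\ref{maincor} follows immediately from the ``finitely many'' in the present theorem and does not require the separate finiteness result for $\Gamma(X,\F)$.
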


As a corollary, we obtain:

\begin{corollary*}[\ref{maincor}]
With the hypotheses of Theorem \ref{fg_sheaf}, there exist integers $m \in \mathbb{Z}, k \geq 0$, such that $\F$ is a quotient of $\mc{O}_X(m)^{\oplus k}$
\end{corollary*}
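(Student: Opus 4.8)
The plan is to deduce the corollary directly from Theorem~\ref{fg_sheaf} together with the standard observation that being ``generated by global sections'' means being a quotient of a free sheaf. First I would apply Theorem~\ref{fg_sheaf} to the coherent sheaf $\F$: there is an integer $n_0$ such that $\F(n)$ is generated by finitely many global sections for every $n \geq n_0$. Fix one such $n$, say $n = n_0$, and set $m = -n_0$. Let $s_1, \dots, s_k \in \Gamma(X, \F(n))$ be global sections that generate $\F(n)$ as an $\OO_X$-set; here ``generate'' means that on each affine open $U \subseteq X$ the images of the $s_i$ in $\F(n)(U)$ generate $\F(n)(U)$ as a pointed $\OO_X(U)$-set, i.e.\ every local section is $f \cdot (s_i|_U)$ for some $f \in \OO_X(U)$ and some $i$ (or is the basepoint).

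Next I would package these sections into a morphism of sheaves. Each $s_i$ corresponds to a map $\OO_X \to \F(n)$ of $\OO_X$-modules sending the unit section $1$ to $s_i$; assembling the $k$ of them gives a morphism $\varphi\colon \OO_X^{\oplus k} \to \F(n)$ of quasicoherent sheaves. Because the $s_i$ generate $\F(n)$ locally in the pointed-set sense just recalled, $\varphi$ is surjective: over each affine open $U$, every element of $\F(n)(U)$ is in the image of $\varphi_U$. (One should check the direct sum $\OO_X^{\oplus k}$ is formed in the category of $\OO_X$-module sheaves of pointed sets, i.e.\ the wedge/coproduct of pointed sheaves with the diagonal $\OO_X$-action; this is the correct notion of ``free'' quasicoherent sheaf here, and the monoid-scheme literature and the earlier sections of this paper already use it.) Thus $\F(n)$ is a quotient of $\OO_X^{\oplus k}$.

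Finally I would twist back by $-n$. Twisting is an autoequivalence of $Qcoh(X)$ (with inverse twisting by $n$), so it preserves surjections; applying $(-)(-n)$ to $\varphi$ gives a surjection
\[
\OO_X(-n)^{\oplus k} \;=\; \bigl(\OO_X^{\oplus k}\bigr)(-n) \twoheadrightarrow \F(n)(-n) \;\simeq\; \F,
\]
using the canonical identification $\OO_X(a)(b) \simeq \OO_X(a+b)$ and $\F(n)(-n)\simeq\F$. Setting $m = -n$ and keeping $k$ as above yields the desired surjection $\OO_X(m)^{\oplus k} \twoheadrightarrow \F$, proving the corollary.

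I do not expect a serious obstacle here: the content is entirely in Theorem~\ref{fg_sheaf}, and what remains is the formal translation between ``globally generated'' and ``quotient of a free sheaf''. The one point deserving a line of care is that all constructions (direct sums, quotients, the free object $\OO_X^{\oplus k}$, surjectivity) must be interpreted in the pointed-set world, where the coproduct is a wedge sum rather than a direct sum of modules; once that convention is fixed, the argument is the same as over an ordinary scheme.
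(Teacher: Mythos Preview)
Your proposal is correct and follows essentially the same argument as the paper: apply Theorem~\ref{fg_sheaf} to obtain a surjection $\mc{O}_X^{\oplus k}\twoheadrightarrow\F(n)$ from finitely many generating global sections, then twist by $-n$ to get $\mc{O}_X(-n)^{\oplus k}\twoheadrightarrow\F$. The only cosmetic difference is that the paper phrases the last step as ``tensoring with the locally free sheaf $\mc{O}_X(-n)$ preserves surjectivity on stalks,'' whereas you invoke that twisting is an autoequivalence; these are equivalent.
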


One of the key properties of a coherent sheaf on a projective scheme over a field $k$ is the finite-dimensionality of the space of global sections. We obtain the following $\fun$--analog:

\begin{theorem*}[\ref{mainthm3}]
Let $A$ be a graded monoid finitely generated by $A_1$ over $A_0=\langle 0,1 \rangle$, and $\F$ a coherent sheaf on $X=\mproj(A)$. Then $\Gamma(X,\F)$ is a finite pointed set. 
\end{theorem*}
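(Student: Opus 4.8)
The plan is to use a twisting argument to reduce the finiteness of $\Gamma(X,\F)$ to the finiteness of $\Gamma(X,\F(n))$ for a single large $n$, and there to exploit that the hypothesis $A_0=\langle 0,1\rangle$ forces all graded pieces of $A$ to be finite.

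\textbf{Step 1 (local set-up).} Fix generators $x_0,\dots,x_r\in A_1$ of $A$ over $A_0$ and put $U_i:=D_+(x_i)=\mspec(A_{(x_i)})$. Discarding the nilpotent $x_i$ (for which $U_i=\emptyset$) we obtain a finite affine cover $\{U_i\}_{i\in I}$ of $X$; if $I=\emptyset$ then $X=\emptyset$ and there is nothing to prove. The elementary but decisive point is that \emph{each $A_n$ is finite}: since $A_0=\langle 0,1\rangle$ and $A$ is generated in degree one by the $x_i$, every element of $A_n$ is $0$ or a monomial $x_0^{a_0}\cdots x_r^{a_r}$ with $\sum a_i=n$, so $\#A_n\le\binom{n+r}{r}+1$.

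\textbf{Step 2 (finiteness in high twist).} By Theorem \ref{fg_sheaf} there are $n_0$ and a finite pointed set $S\subseteq\Gamma(X,\F(n_0))$ generating $\F(n_0)$. Let $N$ be the graded sub-$A$-set of $\Gamma_*(\F)$ generated by $S$ (so $S$ sits in degree $n_0$ and $N_m=A_{m-n_0}\cdot S$). Because $S$ generates $\F(n_0)$ globally and $\OO_X(1)$ is trivial on each $U_i$, a localization on the charts shows $N_{(x_i)}=\F(U_i)$ for all $i$, hence $\wt N\cong\F$ by (the proof of) Theorem \ref{mainthm1}; thus $\F$ admits a \emph{finitely generated} presenting graded $A$-set $N$, and by Step 1 each $N_m$ is a finite pointed set. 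Now I invoke the $\fun$-analogue of the classical comparison theorem (Hartshorne, II.5.15): for a finitely generated graded $A$-set $N$ the natural map $N\to\Gamma_*(\wt N)$ is an isomorphism in all sufficiently large degrees. This is the natural companion of Theorem \ref{mainthm1}; if not already recorded in the paper's foundations it is proved by the same argument---covering the relations defining a section over the $U_i$ and clearing denominators uniformly, which is possible since $N$ is finitely generated. Combining, $\Gamma(X,\F(n))\cong N_n$ is a finite pointed set for all $n\gg0$.

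\textbf{Step 3 (descending the twist).} Fix such an $n$ and let $\mu_1,\dots,\mu_B$, $B=\binom{n+r}{r}$, be the degree-$n$ monomials in $x_0,\dots,x_r$, viewed as global sections of $\OO_X(n)$. Multiplication defines a map of pointed sets
\[
\Phi:\Gamma(X,\F)\longrightarrow\Gamma(X,\F(n))^{B},\qquad s\longmapsto(\mu_1 s,\dots,\mu_B s),
\]
and I claim it is injective. Suppose $\mu_l s=\mu_l s'$ for every $l$ and fix $i\in I$: the monomial $x_i^n$ is among the $\mu_l$, and since $x_i$ is a unit in $A[x_i^{-1}]$ the morphism $x_i^n\cdot:\F\to\F(n)$ restricts on $U_i$ to the isomorphism induced by the trivialization $\OO_X(n)|_{U_i}\cong\OO_X|_{U_i}$; in particular $x_i^n\cdot:\F(U_i)\to\F(n)(U_i)$ is bijective. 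Restricting $x_i^n s=x_i^n s'$ to $U_i$ gives $s|_{U_i}=s'|_{U_i}$, and as the $U_i$, $i\in I$, cover $X$ and $\F$ is a sheaf, $s=s'$. Hence $\#\Gamma(X,\F)\le\#\Gamma(X,\F(n))^{B}<\infty$ by Step 2, so $\Gamma(X,\F)$ is a finite pointed set.

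\textbf{Main obstacle.} Steps 1 and 3 are formal manipulations. The real content is the high-twist comparison in Step 2: once one knows that global sections in large degree are computed by the (finite) graded pieces of a finitely generated presenting $A$-set, the theorem follows by bookkeeping. If the foundational part of the paper already contains this comparison (the companion to Theorem \ref{mainthm1}), Step 2 is immediate; otherwise transplanting Serre's argument to the $\fun$-setting---together with checking the exactness properties of $\wt{(-)}$ used to produce the finitely generated $N$---is where the work lies.
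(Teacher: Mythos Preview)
Your route is genuinely different from the paper's. The paper argues by base change to a field $K$: the sheaf $\F_K$ on the ordinary projective scheme $X_K$ has finite-dimensional $\Gamma(X_K,\F_K)$ by the classical theorem; if $\Gamma(X,\F)$ were infinite then its image under restriction to some affine chart $U_i$ would be an infinite subset of $\F(U_i)$, and since distinct elements of $\F(U_i)$ are $K$-linearly independent in $K[\F(U_i)]=\F_K(U_{i,K})$, the base-change map $\phi_K:K[\Gamma(X,\F)]\to\Gamma(X_K,\F_K)$ would have infinite-dimensional image, a contradiction. No internal $\fun$-comparison theorem is used.

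Your Steps~1 and~3 are correct and self-contained: the injection of $\Gamma(X,\F)$ into a product of copies of $\Gamma(X,\F(n))$ via multiplication by the $x_i^n$ cleanly reduces everything to finiteness in a single large twist. The gap lies precisely where you flag it. The paper does \emph{not} contain the companion to Theorem~\ref{mainthm1} that you invoke (surjectivity of $N_n\to\Gamma(X,\wt N(n))$ for $n\gg 0$ when $N$ is finitely generated), and your suggestion that it is ``proved by the same argument'' as over rings understates the difficulty. The standard proofs proceed either via the long exact cohomology sequence together with Serre vanishing, or via an additive patching of local expressions $s|_{U_i}=\nu_i/x_i^{k_i}$ into a single global $\nu\in N_n$---and both routes rely essentially on addition, which is unavailable over $\fun$. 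The statement may well be true, but it requires its own argument in the pointed-set world, and without one your proof is incomplete. The virtue of the paper's base-change manoeuvre is exactly that it sidesteps this missing piece of $\fun$-machinery by borrowing the classical result wholesale.
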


The last section is devoted to the study and classification of coherent sheaves on the simplest non-trivial projective scheme - $\mathbb{P}^1$. By viewing $\mathbb{P}^1$ as two copies of $\mathbb{A}^1$ glued together as $\mathbb{A}^1_{0} \cup \mathbb{A}^1_{\infty}$, we give a combinatorial description of the indecomposable coherent sheaves in terms of certain directed graphs and gluing data along the intersection $\mathbb{A}^1_{0} \cap \mathbb{A}^1_{\infty}$. The classification is much richer than in the case of $\mathbb{P}^1_k$ for $k$ a field, as base change to $\on{Spec}(k)$ identifies many  coherent sheaves non-isomorphic over $\fun$. Example \ref{secondexample} exhibits an unusual phenomenon possible over $\fun$ but impossible over $\on{Spec}(k)$: a pair of coherent sheaves $\F, \F'$ with infinitely many non-isomorphic extensions of $\F$ by $\F'$ (though these yield a finite-dimensional space of extensions upon based-change to $\on{Spec}(k)$). 

\bigskip

\noindent{\bf Acknowledgements:} M.S. gratefully acknowledges the support of a Simons Foundation Collaboration Grant during the writing of this paper.

\section{Monoid schemes} \label{msch}

In this section, we briefly recall the notion of a monoid scheme following \cite{CHWW, D}, which we will use as our model for algebraic geometry over $\fun$. This is essentially equivalent to the notion of $\mathfrak{M}_0$-scheme in the sense of \cite{CC2}  For other (some much more general) approaches to schemes over $\fun$, see \cite{H2,Du,LPL2, Lor3, Sou,TV}. Recall that ordinary schemes are ringed spaces locally modeled on affine schemes, which are spectra of commutative rings.  A monoid scheme is locally modeled on an affine monoid scheme, which is the spectrum of a commutative unital  monoid with $0$. In the following, we will denote monoid multiplication by juxtaposition or "$\cdot$". In greater detail:

A \emph{monoid} $A$ will be a commutative associative monoid with identity $1_A$ and zero $0_A$ (i.e. the absorbing element). We require
\[
 1_A \cdot a = a \cdot 1_A = a \hspace{1cm} 0_A \cdot a = a \cdot 0_A = 0_A \hspace{1cm} \forall a \in A
\]
Maps of monoids are required to respect the multiplication as well as the special elements $1_A, 0_A$. An \emph{ideal} of $A$ is a nonempty subset  $\a \subset A$ such that $A \cdot \a \subset \a$. An ideal $\p \subset A$ is \emph{prime} if $xy \in \p$ implies either $x \in \p$ or $y \in \p$. 

Given a monoid $A$, $\spec (A)$ is defined to be the topological space with underlying set $$\spec(A) := \{ \p | \p \subset A \textrm{ is a prime ideal } \}, $$ equipped with the Zariski topology, whose closed sets are of the form $$ V(\a) := \{ \p | \a \subset \p, \p \textrm{ prime } \},  $$ where $\a$ ranges over all ideals of $A$. 
Given a multiplicatively closed subset $S \subset A$, the \emph{localization of $A$ by $S$}, denoted $S^{-1}A$, is defined to be the monoid consisting of symbols $\{ \frac{a}{s} |  a \in A, s \in S \}$, with the equivalence relation $$\frac{a}{s} = \frac{a'}{s'}  \iff \exists \; s'' \in S \textrm{ such that } as's'' = a's s'', $$ and multiplication is given by $\frac{a}{s} \times \frac{a'}{s'} = \frac{aa'}{ss'} $. 

For $f \in A$, let $S_f$ denote the multiplicatively closed subset $\{ 1, f, f^2, f^3, \cdots,  \}$. We denote by $A_f$ the localization $S^{-1}_f A$, and by $D(f)$ the open set $\spec(A) \backslash V(f) \simeq \spec A_f$, where $V(f) := \{ \p \in \spec (A) | f \in \p \}$. The open sets $D(f)$ cover $\spec (A)$. 
$\spec (A)$ is equipped with a  \emph{structure sheaf} of monoids $\mc{O}_{\spec(A)}$, satisfying the property $\Gamma(D(f), \mc{O}_{\spec(A)}) = A_f$.    Its stalk at $\p \in \spec A$ is $A_{\p} := S^{-1}_{\p} A$, where $S_{\p} = A \backslash \p$. 

A unital homomorphism of monoids $\phi: A \rightarrow B$ is \emph{local} if $\phi^{-1}(B^{\times}) \subset A^{\times}$, where $A^{\times}$ (resp. $B^{\times}$) denotes the invertible elements in $A$ (resp. $B$). 
A \emph{monoid space} is a pair $(X, \mc{O}_X)$ where $X$ is a topological space and $\mc{O}_X$ is a sheaf of monoids. A \emph{morphism of monoid spaces} is a pair $(f, f^{\#})$ where $f: X \rightarrow Y$ is a continuous map, and $f^{\#}: \mc{O}_Y \rightarrow f_* \mc{O}_X$ is a morphism of sheaves of monoids, such that the induced morphism on stalks $f^{\#}_\p : \mc{O}_{Y, f(\p)} \rightarrow f_* \mc{O}_{X, \p}$ is local. 
An \emph{affine monoid scheme} is a monoid space isomorphic to $(\spec (A), \mc{O}_{\spec(A)})$. 
Thus, the category of affine monoid schemes is opposite to the category of monoids. 
A monoid space $(X,\mc{O}_X)$ is called a \emph{monoid scheme}, if for every point $x \in X$ there is an open neighborhood $U_x \subset X$ containing $x$ such that $(U_x, \mc{O}_X \vert_{U_x})$ is an affine monoid scheme. We denote by $\Msch$ the category of monoid schemes.

\begin{example} \label{P1}

Denote by $\langle t \rangle$ the free commutative unital monoid with zero generated by $t$, i.e.
\[
\langle t \rangle := \{ 0, 1, t, t^2, t^3, \cdots, t^n, \cdots \},
\]
and let $\mathbb{A}^1 := \on{MSpec} ( \langle t \rangle )$ - the monoid affine line.
Let $\langle t, t^{-1} \rangle$ denote the monoid 
\[
\langle t,t^{-1} \rangle := \{ \cdots, t^{-2}, t^{-1}, 1, 0, t, t^2, t^3, \cdots \}.
\]
We obtain the following diagram of inclusions
\begin{equation*}
\langle t \rangle \hookrightarrow \langle t,t^{-1} \rangle \hookleftarrow \langle t^{-1} \rangle.
\end{equation*}
Taking spectra, and denoting by $U_0 = \on{MSpec} ( \langle t \rangle ) , U_{\infty} = \on{MSpec} ( \langle t^{-1} \rangle) $, we obtain
\begin{equation*}
\mathbb{A}^1 \simeq U_0 \hookleftarrow U_0 \cap U_{\infty} \hookrightarrow U_{\infty} \simeq \mathbb{A}^{1}
\end{equation*}
We define $\mathbb{P}^{1}$, the monoid projective line, to be the monoid scheme obtained by gluing two copies of $\mathbb{A}^1$ according to this diagram. It has three points - two closed points $0 \in U_0, \;  \infty \in U_{\infty}$, and the generic point $\eta$. Denote by $\iota_{0} : U_0 \hookrightarrow \Pone$, $\iota_{\infty}: U_{\infty} \hookrightarrow \Pone$  the corresponding inclusions. 

\end{example}

\subsection{Base Change}

Given a commutative ring $R$, there exists a base-change functor
\begin{align*}
 \Msch & \rightarrow \on{Sch} / \on{Spec} R \\
 X & \rightarrow X_R
\end{align*}
It is defined on affine schemes by
\[
(\mspec{A})_{R} = \on{Spec} R[A] 
\]
where $R[A]$ is the monoid algebra:
\[
R[A] := \left\{ \sum r_i a_i | a_i \in A, a_i \neq 0, r_i \in R \right\}
\]
with multiplication induced from the monoid multiplication. 
For a general monoid scheme $X$, $X_R$ is defined by gluing the open affine subfunctors of $X$.

\section{$A$-modules} \label{Amod}



In this section, we briefly recall the properties of set-theoretic $A$-modules following \cite{CLS}. 

Let $A$ be a monoid. An \emph{$A$--module} is a pointed set $(M,*_{M})$ together with an action 
\begin{align*}
\mu: A \times M & \rightarrow M \\
(a,m) & \rightarrow a\cdot m
\end{align*}
which is compatible with the monoid multiplication (i.e. $1_A \cdot m = m$, $a \cdot (b \cdot m) = (a \cdot b) \cdot m$), and $0_A \cdot m = *_{M} \; \forall m \in M$). 
We will refer to elements of $M \backslash *_M$ as \emph{nonzero} elements. 

A \emph{morphism of $A$--modules} $f: (M,*_M) \rightarrow (N,*_N)$ is a map of pointed sets (i.e. we require $f(*_M) = *_M$) compatible with the action of $A$, i.e. $f(a \cdot m) = a \cdot f(m)$.

A pointed subset $(M',*_M) \subset (M,*_M)$ is called an \emph{$A$--sub-module} if $A \cdot M' \subset M'$. In this case we may form the quotient module $M/M'$, where $M/M' := M \backslash (M' \backslash *_M)$, $*_{M/M'} = *_M$, and the action of $A$ is defined by setting 
$$a \cdot \overline{m} = \left\{ \begin{array}{ll} \overline{a \cdot m} & \textrm{if } a \cdot m \notin M' \\ *_{M/M'} & \textrm{ if } a\cdot m \in M'    \end{array} \right. $$  where $\overline{m}$ denotes $m$ viewed as an element of $M/M'$. If $M$ is finite, we define $|M| = \# M -1 $, i.e. the number of non-zero elements. 

\bigskip

Denote by $\Amod$ the category of $A$--modules. It has the following properties:
\begin{enumerate}
\item $\Amod$ has a zero object $\emptyset= \{ * \}$ - the one-element pointed set. 
\item A morphism $f: (M,*_M) \rightarrow (N,*_N)$ has a kernel $(f^{-1}(*_N), *_M)$ and a cokernel $N/\on{Im}(f)$.
\item $\Amod$ has a symmetric monoidal structure $$M \oplus N := M \vee N := M \sqcup N / *_M \sim *_N$$ which we will call "direct sum", with identity object $\{*\}$. 
\item If $R \subset M \oplus N$ is an $A$--submodule, then $R = (R \cap M) \oplus (R \cap N) $.  
\item $\Amod$ has a symmetric monoidal structure  $$M \otimes_{A} N :=  M \times N / \sim, $$ where $\sim$ is the equivalence relation generated by  $$(a \cdot m, n) \sim (m, a \cdot n), \; a \in A,$$ with identity object $\{ A \}$. 
\item $\oplus, \otimes$ satisfy the usual associativity and distributivity properties. 
\end{enumerate}

\bigskip

$M \in \; \Amod$ is \emph{finitely generated} if there exists a surjection $\oplus^n_{i=1} A \rightarrow M$ of $A$--modules for some $n$. Explicitly, this means that there are $m_1, \cdots, m_n \in M$ such that for every $m \in M$, $m = a \cdot m_i$ for some $1\leq i \leq n$, and we refer to the $m_i$ as \emph{generators}. $M$ is said to be \emph{free of rank n} if $M \simeq \oplus^n_{i=1} A$. 
For an element $m \in M$, define $$Ann_A (m) := \{ a \in A | a \cdot m = *_M \}.$$  Obviously $0_A \subset Ann_A (m) \;  \forall m \in M$. An element $m \in M$ is \emph{torsion} if $Ann_A (m) \neq 0_A$. The subset of all torsion elements in $M$ forms an $A$--submodule, called the \emph{torsion submodule} of $M$, and denoted $M_{tor}$. An $A$--module is \emph{torsion-free} if $M_{tor} = \{ *_M \}$ and \emph{torsion} if $M_{tor} = M$. 
We define the \emph{length} of a torsion module $M$ to be $|M|$.

\section{Quasicoherent sheaves} \label{coh}

In this section, we briefly recall the definitions and properties of quasicoherent and coherent sheaves over monoid schemes. We refer the reader to \cite{CLS} for details. 

Given a multiplicatively closed subset $S \subset A$ and an $A$--module $M$, we may form the $S^{-1} A$--module $S^{-1}M$, where
\[
S^{-1} M := \{ \frac{m}{s} \vert \; m \in M, s \in S \} 
\]
with the equivalence relation 
$$\frac{m}{s} = \frac{m'}{s'}  \iff \exists \; s'' \in S \textrm{ such that } s's'' m = s s'' m', $$
where the $S^{-1}A$--module structure is given by $\frac{a}{s} \cdot \frac{m}{s'} := \frac{am}{ss'}.$ For $f \in A$, we define $M_f$ to be $S^{-1}_f M$. 

Let $X$ be a topological space, and $\mc{A}$ a sheaf of monoids on $X$. We say that a sheaf of pointed sets $\mc{M}$ is an \emph{$\mc{A}$--module} if for every open set $U \subset X$, $\mc{M}(U)$ has the structure of an $\mc{A}(U)$--module with the usual compatibilities. In particular, given a monoid $A$ and an $A$--module $M$, there is a sheaf of $\mc{O}_{\spec(A)}$--modules $\wt{M}$ on $\on{MSpec} (A)$, defined on basic affine sets $D(f)$ by $\wt{M}(D(f)) := M_f $.  
For a monoid scheme $X$, a sheaf of $\mc{O}_X$--modules $\F$ is said to be \emph{quasicoherent} if for every $x \in X$  there exists an open affine $U_x \subset X$ containing $x$ and an $O_X (U_x)$--module $M$ such that $\F \vert_{U_x} \simeq \wt{M}$.  $\F$ is said to be \emph{coherent}
if $M$ can always be taken to be finitely generated, and \emph{locally free} if $M$ can be taken to be free. Please note that here our conventions are different from \cite{D}. If $X$ is connected, we can define the \emph{rank} of a locally free sheaf $\F$ to be the rank of the stalk $\F_{x}$ as an $\mc{O}_{X,x}$--module for any $x \in X$. A locally free sheaf of rank one will be called \emph{invertible}. 

\begin{rmk} \label{Noeth_rmk}
The notion of coherent sheaf on ordinary schemes is well-behaved only for schemes that are locally Noetherian. The corresponding notion for monoid schemes is being of \emph{finite type}, and is introduced at the end of section \ref{projschemes}. We will consider coherent sheaves only on monoid schemes satisfying this property. 
\end{rmk}

For a monoid $A$, there is an equivalence of categories between the category of quasicoherent sheaves on $\on{Spec} A$ and the category of $A$--modules, given by $\Gamma(\on{Spec} A, \cdot)$. A quasicoherent sheaf $\F$ on $X$ is \emph{torsion} (resp. \emph{torsion-free}) if $\F(U)$ is a torsion $\mc{O}_X (U)$--module (resp. torsion-free $\mc{O}_X (U)$--module ) for every open affine $U \subset X$. 
 
The operations $\oplus, \otimes$ induce analogous ones on $\mc{O}_X$--modules. More precisely, for $\mc{O}_X$--modules $\F, \G$ and an open subset $U \subset X$ we define $\F \oplus \G (U):= \F(U) \oplus G(U)$ with the obvious $\mc{O}_X (U)$ structure. $\F \otimes_{O_X} \G$ is defined to be the sheaf associated to the presheaf $U \rightarrow \F(U) \otimes_{\mc{O}_X (U)} \G(U)$. If $X = \spec(A)$ is affine, and $M, N$ are $A$--modules,  we have $\wt{M \oplus N} = \wt{M} \oplus \wt{N}$ and \\ $\wt{M \otimes_A N} = \wt{M} \otimes_{\mc{O}_{\spec (A)}} \wt{N}$. This implies that on an arbitrary monoid scheme $X$, quasicoherent and coherent sheaves are closed under $\oplus$ and $\otimes$.  

%

\medskip

\begin{rmk} \label{subobjects}
It follows from property $(4)$ of the category $\Amod$ that if $\F, \F'$ are quasicoherent $\mc{O}_X$--modules, and $\mc{G} \subset \F \oplus \F'$ is an $\mc{O_X}$--submodule, then \mbox{$\mc{G} = (\mc{G}\cap \F) \oplus (\mc{G} \cap \F' )$}, where for an open subset $U \subset X$,  \begin{equation} (\mc{G} \cap \F) (U) := \mc{G}(U) \cap \F (U). \end{equation}
\end{rmk}

\bigskip

If $X$ is a monoid scheme, we will denote by  $QCoh(X)$ (resp. $Coh(X)$) the category of quasicoherent  (resp. coherent) \mbox{$\mc{O}_X$--modules} on $X$. It follows from the properties of the category $\Amod$ listed in section \ref{Amod} that $QCoh(X)$ possesses a zero object $\emptyset$ (defined as the zero module $\emptyset$ on each open affine $\spec A \subset X$), kernels and co-kernels, as well as monoidal structures $\oplus$ and $\otimes$. We may therefore talk about exact sequence in $QCoh(X)$. A short exact sequence isomorphic to one of the form
\[
\emptyset \rightarrow \F \rightarrow \F \oplus \mc{G} \rightarrow \mc{G} \rightarrow \emptyset
\]
is called \emph{split}.  A coherent sheaf $\F$ which cannot be written as $\F=\F' \oplus \F''$ for non-zero coherent sheaves is called \emph{indecomposable}. A coherent sheaf containing no non-zero proper sub-sheaves is called \emph{simple}.

\subsection{Base Change}

Given an $A$--module $M$, and a commutative ring $R$, let
\[
R[M] := \left\{ \sum r_i m_i | m_i \in M, m_i \neq *, r_i \in R \right\}
\]
$R[M]$ naturally inherits the structure of an $R[A]$--module. We may use this to define a base extension functor 
\begin{align*}
QCoh(X) & \rightarrow QCoh(X_R) \\
\mc{F} & \rightarrow \F_R
\end{align*}
It is defined on affines by assigning to $\wt{M}$ on $\mspec{A}$ the quasicoherent sheaf
\[
\wt{M}_R := \wt{R[M]}
\]
on $\mspec(A)_R = \on{Spec}(R[A])$, and for a general monoid scheme by gluing in the obvious way.

 Given a monoid scheme $X$ and $\F \in QCoh(X)$, we have for each open $U \subset X$ a map
\begin{equation} \label{basechange_sec}
\phi_R (U):  R[\Gamma(\F, U)] \rightarrow \Gamma(\F_R, U_R) 
\end{equation}
defined as the unique $R$--linear map with the property that $\phi_R (U) (s) = s \;  \forall s \in \Gamma(\F,U)$. When $U$ is understood, we will refer to this map simply as $\phi_R$.  

\section{Projective schemes over $\fun$} \label{projschemes}

In this section, we briefly recall the construction of the projective monoid scheme $\mproj(A)$ attached to a graded monoid $A$, following \cite{CHWW} (see also \cite{Th1} and \cite{LPL3} for a more general construction in the context of blueprints). It is a straightforward analogue of the $\on{Proj}$ construction for graded commutative rings. 

Let $A = \oplus^{\infty}_{i=0} A_i$ be an $\mathbb{N}$--graded monoid (i.e. $A_i \cdot A_j \subset A_{i+j}$). $A_{\geq 1} = \oplus_{i \geq 1} A_i$ is therefore an ideal, and the map $A \rightarrow A/A_{\geq 1} \simeq A_0$ induces a map $\mspec(A_0) \rightarrow \mspec(A)$, whose image consists of all the prime ideals of $A$ containing $A_{\geq 1}$. Let $\mproj(A)$ denote the topological space  $\mspec (A) \backslash \mspec (A_0)$ with the induced Zariski topology. 

Given a multiplicatively closed subset  $S$ of $A$, the localization $S^{-1} A$ inherits a natural $\mathbb{Z}$--grading by $deg(\frac{a}{b}) = deg(a) - deg(b)$. For an element $f \in A$, let $A_{(f)}$ denote the elements of degree $0$ in the localization $A_f$. Similarly, given a prime ideal $\p \in A$, let $A_{(\p)}$ denote the elements of degree $0$ in $A_{\p}$. For $f \in A$, we may, as in the case or ordinary schemes, identify $D_+ (f) := \mspec(A_{(f)})$ with the open subset $\{ \p \in \mproj(A) \vert f \notin \p \}$ - these cover $\mproj(A)$. Finally, we equip $X = \mproj(A)$ with a monoid structure sheaf $\mc{O}_X$ defined by the property that $\mc{O}_X \vert_{D_+(s)} \simeq \wt{A_{(f)}}$. $(X, \mc{O}_X)$ thus acquires the structure of a monoid scheme locally isomorphic to $(\mspec(A_{(f)}), \mc{O}_{A_{(f)}})$. The stalk of $\mc{O}_{X, \p }$ at $\p \in \mproj(A) $ is $A_{(\p)}$.

\begin{definition}
Let $B$ be a monoid. Let $A=B \langle x_1, x_2, \cdots, x_m \rangle$ denote the graded monoid with $A_0 = B$ and $A_n = \{ b x^{i_1}_1 x^{i_2}_2 \cdots x^{i_m}_m \}$ where $b \in B,$ $i_j \geq 0,$ and $i_1 + i_2 + \cdots + i_m = n $ (i.e. the $x_i$ each have degree 1), with multiplication 
\[
(b x^{i_1}_1 x^{i_2}_2 \cdots x^{i_m}_m) \cdot (b' x^{j_1}_1 x^{j_2}_2 \cdots x^{j_m}_m) = bb' x^{i_1+j_1}_{1} x^{i_2+j_2}_{2} \cdots x^{i_m+j_m}_{m}.
\] 
When $B=\{0,1\}$, we write $B \langle x_1, x_2, \cdots, x_m \rangle$ simply as $\langle x_1, x_2, \cdots, x_m \rangle$.
\end{definition}

\begin{example}
Let $A = \langle t_0, t_1 \rangle$. Then $\mproj(A) \simeq \mathbb{P}^1$. More generally, we define $\mathbb{P}^n$ to be $\mproj( \langle t_0, t_1, \cdots , t_n \rangle)$.
\end{example}

Given a graded monoid $A$ and commutative ring $R$, the monoid algebra $R[A]$ acquires the structure of a graded $R$--algebra (by assigning elements of $R$ degree 0). As shown in \cite{CHWW}, we have

\begin{lemma}
$\mproj(A)_R \simeq \on{Proj}(R[A])$
\end{lemma}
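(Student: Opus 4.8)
The plan is to reduce the statement $\mproj(A)_R \simeq \on{Proj}(R[A])$ to the affine case, where it follows from a direct comparison of the defining monoid/ring operations. First I would recall that both sides are built by gluing: $\mproj(A)$ is covered by the affine pieces $D_+(f) = \mspec(A_{(f)})$ for $f$ a homogeneous element of positive degree, and base change commutes with gluing by the very definition of $X \mapsto X_R$ given in the Base Change subsection, so $\mproj(A)_R$ is glued from the schemes $(\mspec A_{(f)})_R = \Spec R[A_{(f)}]$. On the other side, $\on{Proj}(R[A])$ is covered by the standard opens $D_+(\tilde f) = \Spec (R[A])_{(\tilde f)}$, where $\tilde f \in R[A]$ is the image of the monoid element $f$, a homogeneous element of the graded ring $R[A]$. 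Since the homogeneous elements of $R[A]$ coming from $A$ already generate the irrelevant ideal (as $A$ is generated in degree $1$, or more generally by $A_{\geq 1}$), these opens cover $\on{Proj}(R[A])$.

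The heart of the argument is then the affine identification: for each homogeneous $f \in A_d$ with $d \geq 1$, there is a natural isomorphism of $R$-algebras
\begin{equation*}
R[A_{(f)}] \;\xrightarrow{\ \sim\ }\; (R[A])_{(\tilde f)}.
\end{equation*}
To see this I would note that localization commutes with the monoid-algebra functor, i.e. $R[A_f] \cong (R[A])_{\tilde f}$ as $\mathbb{Z}$-graded $R$-algebras, because a ring presented by generators and relations from a monoid localizes compatibly: an element $a/f^n \in A_f$ maps to $\tilde a/\tilde f^{\,n}$, this is well defined and bijective on the monomial $R$-bases, and it respects the grading $\deg(a/f^n) = \deg a - nd$. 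Taking degree-zero parts on both sides gives $R[A_{(f)}] \cong (R[A])_{(\tilde f)}$, using that $R[A_{(f)}] = R[(A_f)_0]$ since taking the free $R$-module on the degree-zero monomials is exactly the degree-zero part of $R[A_f]$. One should also check that the structure sheaf $\mc{O}_{\mproj(A)}$, which by construction satisfies $\mc{O}_{\mproj(A)}|_{D_+(f)} \cong \wt{A_{(f)}}$, base-changes to the structure sheaf of $\Spec R[A_{(f)}]$ — but this is immediate from $\wt{A_{(f)}}_R = \wt{R[A_{(f)}]}$, which is the affine case of the base change of quasicoherent sheaves recalled earlier.

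Finally I would verify that these affine isomorphisms are compatible on overlaps, so that they glue to a global isomorphism $\mproj(A)_R \simeq \on{Proj}(R[A])$. The overlap $D_+(f) \cap D_+(g)$ corresponds on the monoid side to $\mspec A_{(fg)}$ (localizing further at $g$, taking degree zero) and on the ring side to $\Spec (R[A])_{(\widetilde{fg})}$; the transition maps on both sides are induced by the same universal localization maps, so the square commutes by naturality of $A \mapsto R[A]$ and of localization. Assembling the pieces yields the claimed isomorphism of $R$-schemes over $\Spec R$.

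The main obstacle I anticipate is purely bookkeeping rather than conceptual: carefully setting up the isomorphism $R[S^{-1}A] \cong S^{-1}R[A]$ of graded algebras and checking it descends to degree-zero parts, since monoid-algebra formation and localization are both defined up to an equivalence relation and one must confirm the two equivalence relations match under the natural map. Once that lemma is in hand, the gluing is formal. (I expect the authors may in fact simply cite \cite{CHWW} for the bulk of this, given that the lemma is attributed there.)
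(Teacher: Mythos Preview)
Your proposal is correct, and you anticipated the situation exactly: the paper does not give its own proof of this lemma at all, but simply attributes it to \cite{CHWW} and states it without argument. The approach you outline---identifying $R[A_{(f)}] \cong (R[A])_{(\tilde f)}$ via the compatibility of the monoid-algebra functor with localization and grading, then gluing---is the natural one and is essentially how such a statement is established. There is nothing further to compare.
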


As indicated in Remark \ref{Noeth_rmk}, coherent sheaves are well-behaved only on monoid schemes satisfying a local finiteness condition. We recall this property following \cite{CHWW, CLS}. 

\begin{definition}
A monoid scheme $X$ is of \emph{finite type} if every $x \in X$ has an open affine neighborhood $U_x = \mspec(A_x)$ with $A_x$ a finitely generated monoid. 
\end{definition}

The following lemma is immediate:

\begin{lemma}
\begin{enumerate}
\item Let $A$ be a graded monoid. If $A$ is finitely generated, then $\mproj(A)$ is of finite type.
\item Let $B$ be a finitely generated monoid. Then for every $r \geq 0$, $ \mproj(B \langle x_1, x_2, \cdots, x_m \rangle) $ is of finite type. 
\end{enumerate}
\end{lemma}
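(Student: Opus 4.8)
The plan is to prove part (1) directly by producing a \emph{finite} affine open cover of $\mproj(A)$ by spectra of finitely generated monoids, and then to deduce part (2) as an immediate special case.

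For part (1), since $A$ is $\N$-graded and finitely generated, choose finitely many homogeneous generators $x_1,\dots,x_n$ and set $d_i=\deg(x_i)$, ordered so that $d_i>0$ exactly for $i\le k$. Because $A$ is commutative, every nonzero element of $A$ has the form $x^e:=x_1^{e_1}\cdots x_n^{e_n}$ for some $e\in\N^n$ (not necessarily uniquely), with $\deg(x^e)=\sum_i e_i d_i$. First I would check that $X=\mproj(A)=\bigcup_{i\le k}D_+(x_i)$: if $\p\in X$ then $\p\not\supseteq A_{\ge 1}$, so some homogeneous element of positive degree lies outside $\p$, and by primeness some $x_i$ with $d_i>0$ lies outside $\p$, i.e.\ $\p\in D_+(x_i)$. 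Since $D_+(x_i)=\mspec(A_{(x_i)})$, it then remains to show each $A_{(x_i)}$ is a finitely generated monoid.

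For this last point I would argue as follows. The nonzero elements of $A_{(x_i)}$ are the symbols $x^e/x_i^m$ with $\sum_j e_j d_j=m d_i$, and the assignment $(e,m)\mapsto x^e/x_i^m$ is a surjective homomorphism from the submonoid $\Lambda_i=\{(e,m)\in\N^{n+1}\mid \sum_j e_j d_j - m d_i=0\}$ of $\N^{n+1}$ onto $A_{(x_i)}\setminus\{*\}$. Now $\Lambda_i$ is the set of lattice points in the intersection of $\R_{\ge0}^{n+1}$ with a rational hyperplane through the origin, hence is a finitely generated monoid by Gordan's lemma for affine (toric) monoids; the images of a finite generating set, together with $0_{A_{(x_i)}}$, then generate $A_{(x_i)}$. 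Thus $X$ is covered by the $k$ affine opens $\mspec(A_{(x_i)})$ with each $A_{(x_i)}$ finitely generated, so $X$ is of finite type. For part (2): if $B$ is generated as a monoid by $b_1,\dots,b_\ell$, then $B\langle x_1,\dots,x_m\rangle$ is generated by $b_1,\dots,b_\ell$ in degree $0$ together with $x_1,\dots,x_m$ in degree $1$, hence is a finitely generated graded monoid, and part (1) applies.

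I do not expect a genuine obstacle here; the statement is essentially bookkeeping. The only step that is not purely formal is the finite generation of $A_{(x_i)}$, which rests on Gordan's lemma — the fact that the nonnegative integer solutions of a homogeneous linear system form a finitely generated monoid — so in the write-up I would either cite this standard result or give the one-line reduction to it rather than reprove it.
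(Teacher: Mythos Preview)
Your argument is correct. The paper does not actually supply a proof of this lemma: it is introduced with the sentence ``The following lemma is immediate'' and nothing further is given. Your write-up therefore provides strictly more than the paper does, and it fills in the one step that is not purely formal---finite generation of each $A_{(x_i)}$ via Gordan's lemma---in exactly the way one would expect the authors had in mind.
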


\section{Quasicoherent sheaves on projective schemes over $\fun$ } 

Let $A$ be an $\mathbb{N}$--graded monoid, $X = \mproj(A)$, and $M = \oplus_{i \in \mathbb{Z}} M_i$ a $\mathbb{Z}$--graded $A$--module (an $A$--module such that $A_i \cdot M_j \subset M_{i+j}$). For a multiplicatively closed subset $S \subset A$, the localization $S^{-1} M$ inherits a natural grading by $deg(\frac{m}{s}) = deg(m) - deg(s)$, and as in the previous section we use the notation $M_{(f)}$ and $M_{(\p)}$ to denote the degree zero elements in $S^{-1}_{f} M$ and $S^{-1}_{\p} M$ respectively. We may associate to $M$ a quasi-coherent sheaf $\mt$ on $X$ such that for an open subset $U \subset X$, $\mt(U)$ consists of functions $U \rightarrow \sqcup_{\p \in U} M_{(\p)}$ which are locally induced by fractions of the form $\frac{m}{s}$. As for ordinary $\on{Proj}$, one readily checks that $\mt \vert_{ D_+ (f)} \simeq \widetilde{M_{(f)}}$, and that $\mt_{\p} = M_{(\p)}$. If $A$ and $M$ are finitely generated (as a monoid and $A$--module respectively), then $\mt$ is coherent. 

\bigskip
Let $grA-mod$ denote the category of graded $A$--modules whose morphisms are grading-preserving maps of $A$--modules.  The assignment $M \rightarrow \mt$ defines a functor $grA-mod \rightarrow Qcoh(X)$.

%


Given a $\mathbb{Z}$--graded $A$--module $M$ and $n \in \mathbb{Z}$, let $M(n)$ denote the graded $A$--module defined by $M(n)_i := M_{i+n}$. 

\begin{definition}
For $n \in \mathbb{Z}$, denote by $\mc{O}_X (n)$ the sheaf $\wt{A(n)}$ of $\mc{O}_X$--modules on $X = \mproj(A)$. More generally, for a sheaf $\F$ of $\mc{O}_X$--modules, denote $\F \otimes_{\mc{O}_X} \mc{O}_X (n)$ by $\F(n)$. 
\end{definition}

Let us now assume that $A$ is generated by $A_1$ over $A_0$, and that $A_1$ is finite. $X=\mproj(A)$ then has a finite affine cover of the form $\{ D_+ (f) \}, f \in A_1$, with $\mc{O}_X \vert_{D_+ (f)} \simeq \wt{A_{(f)}}$.

\begin{lemma} Let $X = \mproj(A)$.
\begin{enumerate}
\item The sheaf $\mc{O}_X (n)$ is locally free. 
\item For graded $A$--modules $M$ and $N$, $\wt{M \otimes_A N} \simeq \wt{M} \otimes_{\mc{O}_X} \wt{N}$.  . 
\item  $\wt{M}(n) \simeq \wt{M(n)}$. In particular, $\mc{O}_X (n) \otimes_{\mc{O}_X} \mc{O}_X(m) \simeq \mc{O}_X (m+n)$
\end{enumerate}
\end{lemma}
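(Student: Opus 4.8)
The plan is to verify each assertion locally on the standard affine cover $\{D_+(f)\}_{f \in A_1}$ and then invoke the fact that a morphism of sheaves that is an isomorphism on each member of an open cover is an isomorphism. Throughout, the key computational input is the identity $\wt{M}\vert_{D_+(f)} \simeq \widetilde{M_{(f)}}$ recorded in the previous section, which reduces everything to statements about modules over the monoid $A_{(f)}$.

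For part (1), I would fix $f \in A_1$ and exhibit an isomorphism of $A_{(f)}$-modules $A(n)_{(f)} \simeq A_{(f)}$. The point is that multiplication by $f^n$ (or $f^{-n}$ if $n<0$), viewed as an element of the $\mathbb{Z}$-graded localization $A_f$ of degree $n$, gives a bijection between the degree-$0$ part of $A_f$ and the degree-$n$ part, and $A(n)_{(f)}$ is by definition exactly the degree-$n$ part of $A_f$. Concretely, the map $a/f^k \mapsto f^n a/f^k$ is $A_{(f)}$-linear and invertible with inverse multiplication by $f^{-n}$; one checks it is well-defined on the localization and sends $*$ to $*$. Hence $\mc{O}_X(n)\vert_{D_+(f)} \simeq \widetilde{A_{(f)}} \simeq \mc{O}_X\vert_{D_+(f)}$, so $\mc{O}_X(n)$ is locally free (of rank one); this also already gives the ``in particular'' half of part (3) once (3) is proved in general, and even on its own shows $\mc{O}_X(n)$ is invertible.

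For part (2), I would use that $\otimes_{\mc{O}_X}$ is defined as the sheafification of the presheaf $U \mapsto \F(U)\otimes_{\mc{O}_X(U)} \G(U)$, so on the basic affine $D_+(f)$ it suffices to produce a natural isomorphism $(M\otimes_A N)_{(f)} \simeq M_{(f)} \otimes_{A_{(f)}} N_{(f)}$ of $A_{(f)}$-modules. This is the graded/degree-zero analogue of the familiar compatibility of tensor product with localization for set-theoretic $A$-modules, which is already invoked in the affine setting in section \ref{coh} (the identity $\wt{M\otimes_A N} = \wt{M}\otimes\wt{N}$ on $\mspec(A)$); here one restricts attention to elements of total degree zero and checks that the evident map $m/f^j \otimes n/f^k \mapsto (m\otimes n)/f^{j+k}$ is a well-defined bijection respecting basepoints. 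The naturality and the compatibility of these local isomorphisms on overlaps $D_+(f)\cap D_+(g) = D_+(fg)$ is then a routine check, giving the global isomorphism $\wt{M\otimes_A N}\simeq \wt{M}\otimes_{\mc{O}_X}\wt{N}$.

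Part (3) is then a formal consequence: by definition $\wt{M}(n) = \wt{M}\otimes_{\mc{O}_X}\mc{O}_X(n) = \wt{M}\otimes_{\mc{O}_X}\wt{A(n)}$, which by part (2) is $\wt{M\otimes_A A(n)}$, and one checks the evident degree-shifting isomorphism $M\otimes_A A(n) \simeq M(n)$ of graded $A$-modules (sending $m\otimes a \mapsto m a$, with $a$ regarded in shifted degree). Taking $M = A(m)$ and using $A(m)(n) \simeq A(m+n)$ yields $\mc{O}_X(m)\otimes_{\mc{O}_X}\mc{O}_X(n)\simeq \mc{O}_X(m+n)$. The main obstacle, such as it is, lies in part (2): one must be careful that the set-theoretic tensor product $M\otimes_A N = M\times N/\!\sim$ behaves well under localization and under passing to the degree-zero component — in particular that no collapsing of basepoints is lost or introduced — but this is exactly parallel to the affine statement already used in the excerpt, so no genuinely new difficulty arises. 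The finiteness hypothesis on $A_1$ is used only to guarantee a finite affine cover, which is not strictly needed for the isomorphism statements but keeps us in the setting where these sheaves are coherent.
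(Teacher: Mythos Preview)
Your proposal is correct and follows essentially the same approach as the paper: verify everything on the basic opens $D_+(f)$ for $f\in A_1$, use multiplication by $f^{\pm n}$ to trivialize $\mc{O}_X(n)$, write down the explicit map $(m\otimes n)/f^{i+j} \leftrightarrow (m/f^i)\otimes(n/f^j)$ for part (2), and deduce (3) by specializing $N=A(n)$ (together with the evident graded isomorphism $M\otimes_A A(n)\simeq M(n)$, which the paper leaves implicit and you spell out). The only cosmetic difference is the direction of the maps you write down.
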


\begin{proof}
(1) Let $f \in A_1$.  We have $\mc{O}_X (n) \vert_{D_+ (f)} = \wt{A(n)_{(f)}}$, with $A(n)_{(f)} = \{ \frac{a}{f^d} \}$,  where $a \in A(n)_d = A(n+d)$. Since $f$ is invertible in $A_{(f)}$, division by $f^n$ defines an isomorphism of $A_{(f)}$--modules from  $A(n)_{(f)}$ to $A_{(f)}$, inducing an isomorphism of $\mc{O}_X$ modules $$\mc{O}_X (n) \vert_{D_+ (f)} \rightarrow \wt{A_{(f)}} = \mc{O}_X \vert_{D_+ (f)}.$$ Since $A_1$ generates $A$ over $A_0$, the $D_+ (f)$ cover $X$, and so the claim follows.

For (2), we have $\wt{M \otimes_A N} \vert_{D_+ (f)} = \wt{ M \otimes_A N_{(f)}}$, and $\wt{M} \otimes_{\mc{O}_X} \wt{N} \vert_{D_+ (f)} = \wt{M_{(f)} \otimes_{A_{(f)}} N_{(f)} }$. Given $m \in M_i, n \in N_j$, we can send $\frac{m \otimes n}{f^{i+j}} \in M \otimes_A N_{(f)}$ to $\frac{m}{f^i} \otimes \frac{n}{f^j} \in M_{(f)} \otimes_{A_{(f)}} N_{(f)}$. This defines an isomorphism of $A_{(f)}$--modules. These are easily seen to glue, yielding the desired isomorphism of quasicoherent sheaves. 

(3) For the first part, take $N=A(n)$.  The second follows by further specializing to $M = A(m)$. 

\end{proof}

\begin{definition}
Let $A$ be a graded monoid, $X = \mproj(A)$ and $\mc{F}$ a sheaf of $\mc{O}_X$--modules. Let $\Gamma_*  ( \mc{F}) = \oplus_{n \in \mathbb{Z}} \Gamma(X, \mc{F}(n))$. $\Gamma_* (\mc{F})$ has the structure of a $\mathbb{Z}$--graded $A$--module by placing $\Gamma(X, \mc{F}(n))$ in degree $n$, and defining the action $$A_i \times \Gamma(X, \mc{F}(n)) \rightarrow \Gamma(X, \mc{F}(n+i))$$ by identifying $A_i$ with global sections of $\mc{O}_X (i)$ and using the isomorphism $\mc{F}(n) \otimes_{\mc{O}_X} \mc{O}_X (i) \simeq \mc{F}(n+i) $. 
\end{definition}

\begin{theorem}
Let $B$ be a monoid, and $A = B \langle x_0, \cdots, x_r \rangle,\;  r \geq 1$. Let $X = \mproj(A)$. Then $\Gamma(X, \mc{O}_X (n)) = A_n$. 
\end{theorem}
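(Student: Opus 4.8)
The plan is to compute the global sections by using the standard affine cover $\{D_+(x_i)\}_{i=0}^r$ and the fact that a global section of $\mc{O}_X(n) = \wt{A(n)}$ is a compatible family of sections over this cover. First I would recall that $\Gamma(D_+(x_i), \mc{O}_X(n)) = A(n)_{(x_i)}$, which consists of fractions $\frac{a}{x_i^d}$ with $a \in A_{n+d}$; equivalently, writing elements of $A$ in the monomial form $b\, x_0^{j_0}\cdots x_r^{j_r}$ with $b \in B$, this is the set of monomials $b\, x_0^{k_0}\cdots x_r^{k_r}$ of degree $n$ in the $\mathbb{Z}$-graded localization $A[x_i^{-1}]$, i.e. those with $k_i \in \mathbb{Z}$ possibly negative but $k_\ell \geq 0$ for $\ell \neq i$, together with the zero element. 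So $\Gamma(D_+(x_i),\mc{O}_X(n))$ is the pointed set of such "Laurent monomials in $x_i$" of total degree $n$.

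Next I would describe the restriction maps: on the overlap $D_+(x_i) \cap D_+(x_j) = D_+(x_ix_j)$ the sections are degree-$n$ monomials in $A[x_i^{-1},x_j^{-1}]$, and the two restriction maps are the obvious inclusions. A global section is then a tuple $(s_0,\dots,s_r)$ with $s_i \in A(n)_{(x_i)}$ agreeing on all overlaps. The key step is the intersection computation: I claim
\[
\bigcap_{i=0}^r A(n)_{(x_i)} = A_n
\]
inside the common localization $A[x_0^{-1},\dots,x_r^{-1}]$, where $A_n$ is embedded via $a \mapsto a$ (i.e. $\frac{a}{1}$ in each chart). For the nonzero part this is the statement that a Laurent monomial $b\, x_0^{k_0}\cdots x_r^{k_r}$ of degree $n$ which, for every $i$, has all exponents except possibly the $i$-th nonnegative, must in fact have all exponents nonnegative — indeed if some $k_m < 0$, then taking $i \neq m$ (possible since $r \geq 1$) contradicts the chart-$i$ condition. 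Such a monomial lies in $A_n$ since $k_0 + \cdots + k_r = n$ and all $k_\ell \geq 0$. One must also check that the zero section in one chart forces the zero section in all charts: this follows because a nonzero monomial in one chart localizes to a nonzero element in the overlap (the $x_i$ are non-zero-divisors in these free-monoid localizations), so it cannot agree there with the zero section coming from another chart — hence a section which is zero on one chart is zero everywhere.

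Finally I would assemble: a global section is determined by its value in any single chart, that value must lie in the above intersection $A_n$, and conversely every element of $A_n$ visibly glues to a global section (it restricts to $\frac{a}{1}$ in each chart, compatibly). This gives a natural bijection of pointed sets $\Gamma(X,\mc{O}_X(n)) = A_n$; compatibility with the $A$-module (indeed graded $A$-algebra) structure is immediate from the definitions. I expect the main obstacle to be the bookkeeping in the intersection computation — making the "Laurent monomial" description of $A(n)_{(x_i)}$ precise when $B$ is an arbitrary monoid (so that $A_n$ need not be generated by monomials in a naive sense, but the grading argument still goes through factor-by-factor on the $x$-exponents) — together with the care needed for the $n < 0$ case, where each $A(n)_{(x_i)}$ and hence $A_n$ may simply be the zero module, which the argument should correctly yield.
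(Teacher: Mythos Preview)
Your proposal is correct and follows essentially the same route as the paper: cover $X$ by the $D_+(x_i)$, identify sections over each chart with degree-$n$ elements of the localization $A_{x_i}$, embed all charts in the common localization $A_{x_0\cdots x_r}$ (using that the $x_i$ are non-zero-divisors), and read off the intersection as those Laurent monomials $b\,x_0^{k_0}\cdots x_r^{k_r}$ with all $k_\ell\ge 0$, i.e.\ $A_n$. Your version is slightly more explicit about the zero section and the $n<0$ case, but the argument is the same.
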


\begin{proof}
We have $\Gamma(D_+ (x_i), \mc{O}_X (n)) = A(n)_{(x_i)}$, which is the set of degree $n$ elements in the localization $A_{x_i}$. Thus a global section of $\mc{O}_X (n)$ consists of a tuple $(t_0, \cdots, t_r), t_i \in A_{x_i}, deg(t_i) = n$, such that $t_i \vert_{D_+ (x_i x_j)} = t_j \vert_{D_+ (x_i x_j)}$ for all $i \neq j$. This is equivalent to requiring that $t_i = t_j$ in $A_{x_i x_j}$. Suppose $g = x^{k_0}_0 \cdots x^{k_r}_r$. Since the $x_i$ are not zero-divisors in $A$, the natural map $A_g \rightarrow A_{g x_i}$ is an inclusion for each $i$. $A_{x_i}$ and $A_{x_i x_j}$ are therefore naturally submonoids of $A_{x_0 \cdots x_r}$. It follows that $\Gamma(X, \mc{O}_X (n)) = \{ h \in \cap^r_{i=0} A_{x_i} \vert deg(g) =n \}$. An element of $A_{x_0 \cdots x_n}$ can be written uniquely in the form $b x^{k_0}_0 \cdots x^{k_r}_r,  b \in B, i_j \in \mathbb{Z}$, and lies in $A_{x_i}$ if and only if $k_j \geq 0$ for all $j \neq i$. The result follows. 

\end{proof}

\begin{corollary}
Let $X = \mproj( B \langle x_0, \cdots, x_r \rangle )$. Then $\Gamma_*(X, \mc{O}_X (n)) = A(n)$. 
\end{corollary}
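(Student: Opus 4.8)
The plan is to read this off from the preceding Theorem together with part (3) of the Lemma above, which gives $\mc{O}_X (n) \otimes_{\mc{O}_X} \mc{O}_X (m) \simeq \mc{O}_X (m+n)$, hence $\mc{O}_X (n)(m) \simeq \mc{O}_X (n+m)$. Writing $A = B\langle x_0,\dots,x_r\rangle$, I would first unwind the definition of $\Gamma_*$: by definition $\Gamma_*(X, \mc{O}_X (n)) = \bigoplus_{m \in \mathbb{Z}} \Gamma(X, \mc{O}_X (n)(m))$, with $\Gamma(X, \mc{O}_X (n)(m))$ placed in degree $m$. Using the twist isomorphism, the degree-$m$ component is $\Gamma(X, \mc{O}_X (n+m))$, which the Theorem identifies with $A_{n+m}$; since $A(n)_m = A_{m+n}$ by definition of the shift, summing over $m$ produces an isomorphism of $\mathbb{Z}$-graded pointed sets $\Gamma_*(X, \mc{O}_X (n)) \simeq A(n)$.

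It then remains to check that this identification respects the $A$-module structures. On the left-hand side the action of $a \in A_i$ on $s \in \Gamma(X, \mc{O}_X (n)(m))$ is defined by viewing $a$ as a global section of $\mc{O}_X (i)$ via the Theorem, tensoring, and applying the canonical isomorphism $\mc{O}_X (n)(m) \otimes_{\mc{O}_X} \mc{O}_X (i) \simeq \mc{O}_X (n)(m+i)$. Restricting to the affine charts $D_+(x_j)$, all of these isomorphisms are induced by honest multiplication inside the $\mathbb{Z}$-graded localizations $A_{x_j}$ --- and, as in the proof of the Theorem, compatibly inside the common localization $A_{x_0 \cdots x_r}$ in which all sections of the various $\mc{O}_X (n)(m)$ sit. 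Hence the action becomes the tautological multiplication $A_i \times A_{n+m} \to A_{n+m+i}$, which is exactly the $A$-module structure on $A(n)$.

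The content here is entirely in the Theorem and the Lemma; the only thing that needs care is the bookkeeping of the three grading conventions ($A(n)_m = A_{m+n}$, $\mc{O}_X (n)(m) \simeq \mc{O}_X (n+m)$, and the degree-$m$ placement of $\Gamma(X, \mc{O}_X (n)(m))$ inside $\Gamma_*$) so that no spurious shift or twist creeps into the module action. Equivalently, once one records the special case $\Gamma_*(X, \mc{O}_X) = A$, the general statement follows by the identical degreewise computation; I would nonetheless present the direct argument above, since it keeps the grading explicit.
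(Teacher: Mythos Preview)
Your proof is correct and follows the same approach as the paper: unwind $\Gamma_*$, apply the twist isomorphism $\mc{O}_X(n)(m)\simeq\mc{O}_X(n+m)$, and invoke the preceding Theorem to identify each graded piece with $A_{m+n}=A(n)_m$. The paper's proof is the one-line version of exactly this computation; your additional verification of the $A$-module structure is a welcome bit of care that the paper leaves implicit.
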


\begin{proof}
$\Gamma_* (X, \mc{O}_X (n)) = \oplus_{m \in \mathbb{Z}} \Gamma(X, \mc{O}_X (m+n)) = \oplus_{m \in \mathbb{Z}} A_{m+n}$, where $A_{m+n}$ occurs in degree $m$. The result follows. 
\end{proof}

\bigskip

\begin{lemma} \label{affine_extension}
Let $A$ be a monoid, $\F$ a quasicoherent sheaf on $X = \spec (A)$, and $f \in A$. 
\begin{enumerate}
\item If $s_1, s_2 \in \Gamma(X, \F)$ are global sections such that $s_1  \vert_{D(f)} = s_2 \vert_{D(f)}$, then $f^n s_1 = f^n s_2 \in \Gamma(X, \F)$ for some positive integer $n$.
\item If $s \in \Gamma(D(f), \F)$ then $f^n s$ extends to a global section in $\Gamma(X, \F)$ for some positive integer $n$.
\end{enumerate}
\end{lemma}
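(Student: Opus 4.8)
The plan is to reduce immediately to the module-theoretic description of quasicoherent sheaves on an affine monoid scheme. Since $\F$ is quasicoherent on $X=\spec(A)$, the equivalence of categories $\Gamma(X,-)$ between $QCoh(X)$ and the category of $A$--modules recalled in Section \ref{coh} provides a canonical isomorphism $\F\simeq\wt{M}$, where $M=\Gamma(X,\F)$. Under this identification $\Gamma(D(f),\F)=\Gamma(D(f),\wt{M})=M_f=S_f^{-1}M$, and the restriction map $\Gamma(X,\F)\to\Gamma(D(f),\F)$ is the canonical $A$--linear localization map $\lambda_f\colon M\to M_f$, $m\mapsto \frac{m}{1}$. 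Both assertions thus become statements about $\lambda_f$, to be read off from the definition of localization of a set-theoretic $A$--module.

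For part (1), the hypothesis $s_1\vert_{D(f)}=s_2\vert_{D(f)}$ says precisely that $\frac{s_1}{1}=\frac{s_2}{1}$ in $M_f$. By the equivalence relation defining $S_f^{-1}M$, this means there is some $s''\in S_f$, say $s''=f^m$, with $f^m s_1 = f^m s_2$ in $M$. Setting $n=\max(m,1)$ and multiplying this equality by $f^{n-m}$ gives $f^n s_1=f^n s_2$ in $\Gamma(X,\F)$, as desired.

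For part (2), write the given section $s\in\Gamma(D(f),\F)=M_f$ as $s=\frac{m}{f^{k}}$ with $m\in M$ and $k\geq 0$. Then in $M_f$ one has $f^k\cdot s = \frac{f^k m}{f^k}=\frac{m}{1}=\lambda_f(m)$; that is, $f^k s$ is the image under restriction of the global section $m\in\Gamma(X,\F)$. Taking $n=k$ (or $n=k+1$ if one insists on $n\geq 1$, since then $f^{k+1}s=\lambda_f(fm)$) completes the argument.

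There is no serious obstacle here: the entire content sits in the definition of the module localization $S_f^{-1}M$ together with the affine dictionary between quasicoherent sheaves and $A$--modules, both already available. The only point deserving an explicit line of verification is that the restriction $\Gamma(X,\wt{M})\to\Gamma(D(f),\wt{M})$ genuinely coincides with $\lambda_f$ and respects the $A$--action, which is immediate from the construction of $\wt{M}$ on the basic opens $D(g)$.
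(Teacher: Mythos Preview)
Your proof is correct and follows essentially the same route as the paper: identify $\F$ with $\wt{M}$ for $M=\Gamma(X,\F)$, read the restriction map as the localization $M\to M_f$, and unwind the equivalence relation defining $M_f$ for both parts. Your treatment is slightly more careful about ensuring $n\geq 1$, but otherwise the arguments coincide.
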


\begin{proof}
(1) Since $\F$ is quasicoherent, $\F = \wt{M}$ for some $A$--module $M$, and $s_1, s_2$ can be identified with elements of $M$. $s_i \vert_{D(f)} = \frac{s_i}{1} \in M_f$. The hypothesis then implies that $f^n s_1 = f^n s_2 \in M$ by the definition of the localized module $M_f$.  

(2) We can identify $s \in \Gamma(D(f), \F)$ with an element of the form $\frac{m}{f^n} \in M_f$. Then $f^n s = m \in M = \Gamma(X, \F)$. 
\end{proof}

\medskip

\begin{lemma} \label{sections_extend}
Let $A$ be a graded monoid, finitely generated by $A_1$ over $A_0$, $f \in A_1$, and $\F$ a quasicoherent sheaf on $X = \mproj (A)$. 
\begin{enumerate}
\item If $s_1, s_2 \in \Gamma(X, \F)$ are global sections such that $s_1 \vert_{D_+ (f)} = s_2 \vert_{D_+ (f)}$, then there is a positive integer $n$ such that $f^n s_1 = f^n s_2$ viewed as global sections in $\Gamma(X, \F(n))$. 
\item Given a section $s \in \Gamma(D_+ (f), \F)$, there is a positive integer $m$ such that $f^m s$ extends to a global section of $\F(m)$
\end{enumerate}
\end{lemma}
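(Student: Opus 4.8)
The plan is to transfer the affine statement (Lemma \ref{affine_extension}) to $X=\mproj(A)$ chart by chart. Since $A$ is finitely generated by $A_1$ over $A_0$, there is a finite subset $\{g_1,\dots,g_k\}\subseteq A_1$ generating $A$ over $A_0$, so that $X$ is covered by the affine opens $D_+(g_i)=\mspec A_{(g_i)}$; on each of these $\F$ is of the form $\wt{N_i}$ with $N_i=\Gamma(D_+(g_i),\F)$. Two facts are used throughout. First, by the local triviality of twists proved above, $\F(\ell)|_{D_+(g_i)}\cong\F|_{D_+(g_i)}$ via ``division by $g_i^\ell$'', and under this identification the map $\F(\ell)\to\F(\ell+1)$ obtained by multiplying with the section of $\mc O_X(1)$ attached to $f\in A_1$ (via the natural map $A_1\to\Gamma(X,\mc O_X(1))$) is, on $D_+(g_i)$, multiplication by $f/g_i\in A_{(g_i)}$; the same holds after restricting further to $D_+(g_ig_j)$. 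Second, $D_+(f)\cap D_+(g_i)=D_+(fg_i)$ is the basic open $D(f/g_i)$ of $\mspec A_{(g_i)}$, and $D_+(f)\cap D_+(g_ig_j)=D_+(fg_ig_j)$ is the basic open $D(f/g_i)$ of $\mspec A_{(g_ig_j)}$ --- exactly as for ordinary $\on{Proj}$.

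For part (1), restrict $s_1,s_2$ to each chart $D_+(g_i)$. From $s_1|_{D_+(fg_i)}=s_2|_{D_+(fg_i)}$ and Lemma \ref{affine_extension}(1) applied to $\wt{N_i}$ on the affine $D_+(g_i)$ with the element $f/g_i$, we obtain $n_i$ with $(f/g_i)^{n_i}s_1|_{D_+(g_i)}=(f/g_i)^{n_i}s_2|_{D_+(g_i)}$, i.e.\ $f^{n_i}s_1|_{D_+(g_i)}=f^{n_i}s_2|_{D_+(g_i)}$ in $\Gamma(D_+(g_i),\F(n_i))$. Put $n=\max_i n_i$; multiplying by $f^{n-n_i}$ shows $f^n s_1$ and $f^n s_2$ agree on every $D_+(g_i)$, hence are equal in $\Gamma(X,\F(n))$ by the sheaf axiom.

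For part (2), given $s\in\Gamma(D_+(f),\F)$, restrict to $D_+(fg_i)$ and apply Lemma \ref{affine_extension}(2) to $\wt{N_i}$: there is $m_i$ such that $(f/g_i)^{m_i}(s|_{D_+(fg_i)})$ extends to a section of $\wt{N_i}=\F|_{D_+(g_i)}$, i.e.\ $f^{m_i}s|_{D_+(fg_i)}$ extends to some $u_i\in\Gamma(D_+(g_i),\F(m_i))$. Set $m_0=\max_i m_i$ and replace $u_i$ by $f^{m_0-m_i}u_i\in\Gamma(D_+(g_i),\F(m_0))$, so each $u_i$ restricts to $f^{m_0}s$ on $D_+(fg_i)$. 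The obstruction to gluing the $u_i$ is that on a double overlap $D_+(g_ig_j)$ they are only known to agree on the smaller open $D_+(fg_ig_j)$. To remove it, apply Lemma \ref{affine_extension}(1) once more, now on the affine $D_+(g_ig_j)$ with the element $f/g_i$: this produces $N_{ij}$ with $f^{N_{ij}}u_i|_{D_+(g_ig_j)}=f^{N_{ij}}u_j|_{D_+(g_ig_j)}$ in $\Gamma(D_+(g_ig_j),\F(m_0+N_{ij}))$. Taking $N=\max_{i,j}N_{ij}$ over the finitely many pairs and scaling each $u_i$ by $f^N$, the sections $f^N u_i\in\Gamma(D_+(g_i),\F(m_0+N))$ agree on all overlaps, so they glue to a global section $v\in\Gamma(X,\F(m_0+N))$. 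Finally $v|_{D_+(fg_i)}=f^{m_0+N}s|_{D_+(fg_i)}$ for every $i$, and since $\{D_+(fg_i)\}$ covers $D_+(f)$ this gives $v|_{D_+(f)}=f^{m_0+N}s$; so $m=m_0+N$ works.

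The step I expect to be the main obstacle is the gluing in part (2): the local extensions of $f^{m_0}s$ live on the charts $D_+(g_i)$ but only match on the still smaller opens $D_+(fg_ig_j)$, and --- unlike in the purely affine situation --- their mismatches on double overlaps must be annihilated simultaneously by a single additional power of $f$, which is what forces the second, uniform application of Lemma \ref{affine_extension}(1) over all pairs. The remaining care is bookkeeping: tracking which twist $\F(\ell)$ a section lands in after each multiplication by $f$, and checking that this multiplication is chart-locally multiplication by $f/g_i$; both are routine and mirror the classical $\on{Proj}$ argument.
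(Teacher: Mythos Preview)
Your proof is correct and follows essentially the same approach as the paper's: cover $X$ by the charts $D_+(g_i)$, identify $D_+(f)\cap D_+(g_i)$ with the basic open $D(f/g_i)$, apply Lemma~\ref{affine_extension} chartwise, then for part~(2) kill the mismatches on double overlaps by a further uniform power of $f$. The only cosmetic difference is that the paper invokes its already-proven part~(1) on the overlaps $D_+(g_ig_j)$ rather than appealing directly to Lemma~\ref{affine_extension}(1), which amounts to the same thing.
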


\begin{proof}
(1) Denote by $x_1, \cdots, x_r$ the elements of $A_1$. The sets $D_+ (x_i)$ then form a finite open affine cover of $X$. Let $u_i = \frac{f}{x_i} \in A_{(x_i)}$. The intersection $D_+ (f) \cap D_+ (x_i) = D_+ (f x_i)$ viewed as a subset of $D_+ (x_i) $ then the distinguished open $D(u_i)$.  We have $s_1 = s_2$ on $D(u_i)$, and by part (1) of lemma \ref{affine_extension}, there is a positive integer $n_i$ such that $u^{n_i}_i s_1 = u^{n_i}_i s_2$ on $D_+ (x_i)$. Take $n \geq n_i$ for $i=1, \cdots, r$.

Viewing $\frac{1}{x_i}$ as a section in $\Gamma(D_+(x_i), \mc{O}_X (-1))$, let $\rho_i: \F(n) \vert_{D_+ (x_i)} \rightarrow \F \vert_{D_+ (x_i)}$ be the isomorphism of "dividing by $x^n_i$". I.e. $\rho_{i,n} (s) = s \otimes \frac{1}{x^n_i}$ for $s \in \Gamma(D_+(x_i))$. Viewing $f^n s_i = f^n \otimes s$ as elements in $\Gamma(X, \F(n))$, we have $\rho_{i,n} (f^n s_1) = f^n s_1 \otimes \frac{1}{x^n_i} = u^n_i s_1 = u^n_i s_2 = \rho_{i,n}(f^n s_2)$ for every $i$. It follows that $f^n s_1 = f^n s_2$ in $\Gamma(X, \F(n))$. 

(2) Consider $s \vert_{D_+(x_i) \cap D_+ (f)}$. By part (2) of lemma \ref{affine_extension}, there is a positive integer $n_i$ such that $u^{n_i}_i s = q_i \vert_{D_+(x_i) \cap D_+ (f)}$ for some $q_i \in \Gamma(D_+ (x_i), \F)$. Take $n \geq n_i$ for $i=1, \cdots, r$, and let $t_i = \rho^{-1}_{i,n} (q_i) \in \Gamma(D_+ (x_i), \F(n))$. We have $t_i \vert_{D_+ (f) \cap D_+ (x_i) } = f^n s$, thus $t_i = t_j$ on $D_+(x_i) \cap D_+ (x_j) \cap D_+ (f)$. By part (1), there is a positive integer $k_{ij}$ such that $f^{k_{ij}} t_i = f^{k_{ij}} t_j $ in $\Gamma(D_+(x_i) \cap D_+ (x_j), \F(n+k_{ij}) )$. Taking $k \geq k_{ij}$ for all pairs $i,j$, we see that the sections $f^k t_i \in \Gamma(D_+ (x_i), \F(n+k))$ glue to yield a global section whose restriction to $D_+ (f)$ is $f^{n+k} s$. The result follows taking $m = n +k$.  
\end{proof}

\begin{theorem} \label{mainthm1}
Let $A$ be a graded monoid finitely generated by $A_1$ over $A_0$, and let $X = \mproj(A)$. Given a quasicoherent sheaf $\mc{F}$ on $X$, there exists a natural isomorphism $\beta: \wt{\Gamma_* ( \mc{F})} \simeq \mc{F}$. 
\end{theorem}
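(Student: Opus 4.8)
The plan is to imitate the classical $\on{Proj}$ argument (as in Hartshorne II.5.15) in the monoid setting, using Lemma~\ref{sections_extend} as the key technical tool that replaces the ordinary statements about extending sections. Fix the finite set $x_1,\dots,x_r$ of elements of $A_1$, so that $X$ is covered by the affine opens $D_+(x_i)$. Since $\wt{(-)}$ and $\F$ are both sheaves, to construct $\beta$ it suffices to construct compatible isomorphisms $\beta_i\colon \wt{\Gamma_*(\F)}\vert_{D_+(x_i)}\xrightarrow{\sim}\F\vert_{D_+(x_i)}$ and check they agree on the overlaps $D_+(x_ix_j)$. On $D_+(x_i)$ we have $\wt{\Gamma_*(\F)}\vert_{D_+(x_i)}\simeq\wt{\Gamma_*(\F)_{(x_i)}}$, so the task reduces to producing a natural isomorphism of $A_{(x_i)}$-modules $\Gamma_*(\F)_{(x_i)}\xrightarrow{\sim}\Gamma(D_+(x_i),\F)$ and invoking the equivalence of categories between $A_{(x_i)}$-modules and quasicoherent sheaves on $\mspec(A_{(x_i)})=D_+(x_i)$.

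First I would define the map on the level of modules. An element of $\Gamma_*(\F)_{(x_i)}$ is represented by $s/x_i^d$ with $s\in\Gamma(X,\F(d))$; send it to $\rho_{i,d}(s)\vert_{D_+(x_i)}\in\Gamma(D_+(x_i),\F)$, where $\rho_{i,d}$ is the "divide by $x_i^d$" isomorphism $\F(d)\vert_{D_+(x_i)}\xrightarrow{\sim}\F\vert_{D_+(x_i)}$ from the proof of Lemma~\ref{sections_extend}. Well-definedness (independence of the representative) is exactly part~(1) of Lemma~\ref{sections_extend}: if $s/x_i^d=s'/x_i^{d'}$ in the localization, then after multiplying by a power of $x_i$ the sections $s$ and $s'$ already agree as global sections of a common twist, and applying the $\rho$'s shows the images coincide. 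That this is an $A_{(x_i)}$-module homomorphism is a routine check using the compatibility $\F(n)\otimes\mc{O}_X(m)\simeq\F(n+m)$ built into the $A$-module structure on $\Gamma_*(\F)$.

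Next I would prove this module map is bijective. Surjectivity is part~(2) of Lemma~\ref{sections_extend}: any local section over $D_+(x_i)$, after multiplication by $x_i^m$, extends to a global section of $\F(m)$, hence lies in the image. Injectivity is again part~(1): if $\rho_{i,d}(s)$ vanishes on $D_+(x_i)$, then $s\vert_{D_+(x_i)}=0$ as a section of $\F(d)$, so (applying part~(1) with $s_1=s$, $s_2$ the zero section) $x_i^n s=0$ globally for some $n$, i.e. $s/x_i^d=0$ in the localization. Finally, compatibility on overlaps: on $D_+(x_ix_j)$ the two maps $\beta_i,\beta_j$ differ by the transition isomorphism of $\mc{O}_X(d)$, namely multiplication by $(x_i/x_j)^d$, which is precisely the identification making $\rho_{i,d}$ and $\rho_{j,d}$ agree there; this is the same bookkeeping that already appeared in the proof of Lemma~\ref{sections_extend}. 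Naturality of $\beta$ in $\F$ is immediate from the construction, since every step is functorial in $\F$.

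The main obstacle I anticipate is bookkeeping rather than conceptual: keeping the various twists $\F(d)$ and the "divide by $x_i^d$" isomorphisms $\rho_{i,d}$ straight, and verifying that the single integer $n$ (resp.\ $m$) produced by Lemma~\ref{sections_extend} can be chosen uniformly enough to make the gluing go through — but that uniformity is already baked into the statement of Lemma~\ref{sections_extend}, which quantifies over the finite cover. One should also double-check at the outset that $\Gamma_*(\F)$ is genuinely a graded $A$-module (so that $\wt{\Gamma_*(\F)}$ makes sense) and that the absence of additive structure causes no trouble — but since all constructions here are "multiplicative" (localizations, tensoring with invertible sheaves, gluing of pointed sets), the pointed-set framework behaves exactly as the module framework does, and no new difficulty arises.
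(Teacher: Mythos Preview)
Your proposal is correct and follows essentially the same approach as the paper: define $\beta_i$ on $D_+(x_i)$ by sending $s/x_i^d$ to $s\otimes x_i^{-d}$ (your $\rho_{i,d}(s)$), then use the two parts of Lemma~\ref{sections_extend} for surjectivity and injectivity respectively, and check the $\beta_i$ agree on overlaps. Your write-up is in fact a bit more careful than the paper's about well-definedness and naturality, but the argument is the same.
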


\begin{proof}
Let $f_1, \cdots, f_r \in A_1$ be a set of generators for $A$ over $A_0$. Since $\wt{\Gamma_* (\mc{F})}$ is quasicoherent, it suffices to specify isomorphisms of $A_{(f_i)}$--modules $$\beta_i : \Gamma(D_+ (f_i),  \wt{\Gamma_* ( \mc{F})} ) \rightarrow \Gamma(D_+ (f_i), \mc{F}),$$ and check these glue. A section on the left is represented by a fraction of the form $\frac{t}{f_i^d}$, where $t \in \Gamma(X, \mc{F}(d))$. Let $\beta_i(\frac{t}{f_i^d}) = t \otimes f^{-d}$, where $f_i^-d$ is viewed as a section of $\mc{O}_X (-d)$, and we use the isomorphism $\mc{F}(d) \otimes \mc{O}_X (-d) \simeq \mc{F} $. It is immediate that $\beta_i = \beta_j$ on $D_+ (f_i f_j) = D_+ (f_i) \cap D_+ (f_j)$. 

We now verify that $\beta_i$ is an isomorphism for all $i$.  Let $s \in \Gamma(D_+ (f_i), \F)$. By the second part of \ref{sections_extend}, $f^n_i s$ extends to a section  of $\Gamma(X, \F(n))$ for some $n > o$. We have that $\beta_i (f^n_i s) = s$, so $\beta_i$ is surjective. To show injectivity, suppose $\beta_i ( \frac{t_1}{f^d_i}) = \beta_i (\frac{t_2}{f^d_i})$. By the first part of \ref{sections_extend}, there is an $n>0$ such that $f_i^{n-d} t_1 = f_i ^{n-d} t_2 $ as global sections of $\F(n)$. It follows that $\frac{t_1}{f^d_i} = \frac{t_2}{f^d_i}$ in $\Gamma(D_+ (f_i),  \wt{\Gamma_* ( \mc{F})} ) $. 

\end{proof}

\begin{rmk}

We note that just as in the case of ordinary schemes, the graded $A$--module $M$ giving rise to  $\F=\wt{M}$ is not unique. Let  $M_{\geq d} = \oplus_{i \geq d} M_i $. This is a graded $A$--submodule of $M$. Define an equivalence relation $\sim$ on graded $A$--modules by declaring $M \sim M'$ if there exists an integer $d$ such that $M_{\geq d} \simeq M'_{\geq d}$ as graded $A$--modules. We then have the following result, proved exactly as for ordinary schemes. 
\end{rmk}

\begin{lemma}
Let $M, M'$ be two graded $A$--modules such that $M \sim M'$. Then $\wt{M} \simeq \wt{M'}$ as quasicoherent $\mc{O}_X$--modules. 
\end{lemma}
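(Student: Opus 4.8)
The plan is to reduce the statement to the case of ordinary $\on{Proj}$ as already understood, but really to just prove it directly and locally, since everything in sight is built from the affine charts $D_+(f)$, $f \in A_1$. Fix an isomorphism $M_{\geq d} \simeq M'_{\geq d}$ of graded $A$-modules for some integer $d$, and call it $\psi$. I want to produce an isomorphism $\wt{M} \simeq \wt{M'}$ of sheaves of $\mc{O}_X$-modules. Since $X$ is covered by the affine opens $D_+(f)$ with $f \in A_1$, and since $\wt{M}\vert_{D_+(f)} \simeq \wt{M_{(f)}}$ by the construction recalled at the start of this section, it suffices to show: (i) for each $f \in A_1$ there is an isomorphism of $A_{(f)}$-modules $M_{(f)} \simeq M'_{(f)}$, and (ii) these isomorphisms are compatible on overlaps $D_+(fg)$, hence glue to a sheaf isomorphism. (The fact that a graded iso in high enough degree induces an iso on $X$ for $A$ generated in degree $1$ is exactly where the hypothesis — implicit from the surrounding running assumptions — that $f$ has degree $1$ gets used: multiplication by $f$ shifts degree by $1$, so every element of $M_{(f)}$ is represented by a fraction $m/f^n$ with $\deg m = n$ arbitrarily large.)

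For step (i): an element of $M_{(f)}$ is a class $\frac{m}{f^n}$ with $m \in M_n$. Given such a class, pick a representative with $n \geq d$ (possible: $\frac{m}{f^n} = \frac{f^k m}{f^{n+k}}$ and $f^k m \in M_{n+k}$, so we may inflate $n$ past $d$ at will). Now $m \in M_n = (M_{\geq d})_n$, so $\psi(m) \in (M'_{\geq d})_n = M'_n$ is defined, and I set $\bar\psi_f\!\left(\frac{m}{f^n}\right) := \frac{\psi(m)}{f^n} \in M'_{(f)}$. One checks this is well-defined (independence of the inflated representative uses that $\psi$ is $A$-linear, so commutes with multiplication by $f$, and independence of the choice of $n \geq d$ is the localization relation transported by $\psi$), and it is $A_{(f)}$-linear because $\psi$ is $A$-linear and $A_{(f)}$ is generated by fractions $a/f^j$ with $a$ homogeneous. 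Running the same construction with $\psi^{-1}$ gives a two-sided inverse, so $\bar\psi_f$ is an isomorphism of $A_{(f)}$-modules, i.e. an isomorphism $\wt{M}\vert_{D_+(f)} \simeq \wt{M'}\vert_{D_+(f)}$.

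For step (ii): on $D_+(fg) = D_+(f) \cap D_+(g)$ the chart is $\mspec(A_{(fg)})$, and both $\bar\psi_f$ and $\bar\psi_g$ restrict to the map $\frac{m}{(fg)^n} \mapsto \frac{\psi(m)}{(fg)^n}$ — this is manifest from the formula once everything is viewed inside the common localization $A_{(fg)}$ (use that the maps $M_{(f)} \to M_{(fg)}$ and $M_{(g)} \to M_{(fg)}$ are the obvious localizations, and $\psi$ is compatible with all of them since it is a single $A$-linear map on $M_{\geq d}$). Hence the local isomorphisms agree on overlaps and glue to a global isomorphism $\wt{M} \simeq \wt{M'}$ of $\mc{O}_X$-modules.

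The only genuine subtlety — and the step I'd be most careful about — is well-definedness in step (i): one must be sure that the assignment is independent both of the choice of numerator-degree $n \geq d$ used to represent a given class and of the choice of homogeneous representative $m$ within a fixed degree, and that the localization equivalence relation ($f^{n'}f^{n''}m = \cdots$) is preserved under $\psi$. All of this follows formally from $\psi$ being an isomorphism of graded $A$-modules commuting with multiplication by the homogeneous element $f$, but it is worth spelling out. Everything else is bookkeeping identical to the classical $\on{Proj}$ argument.
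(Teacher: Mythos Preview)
Your proof is correct and is precisely the classical $\on{Proj}$ argument that the paper has in mind: the paper does not spell out a proof, saying only that the lemma is ``proved exactly as for ordinary schemes,'' and what you have written is that standard argument---inflate representatives past degree $d$, transport by $\psi$, check independence and gluing. Nothing to add.
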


%


%

\medskip
\subsection{Global generation of twists}

\begin{definition} Let $X$ be a monoid scheme and $\F$ and $\mc{O}_X$--module. We say that $\F$ is \emph{generated by} $\{ s_i \}_{i \in I}  \in \Gamma(X,\F)$ if for each $x \in X$, the stalk $\F_x$ is generated by $\{ s_{i,x} \}_{i \in I}$ as an $\mc{O}_{X,x}$--module. 
\end{definition}

\begin{rmk}
If $\F$ is a coherent sheaf on an affine monoid scheme $X = \mspec{A}$ of finite type, then $\F = \wt{M}$ for a finitely generated $A$--module $M = \Gamma(X,\F)$. $\F$ is thus generated by finitely many global sections (the generators of $M$).
\end{rmk}

\medskip

\begin{theorem} \label{fg_sheaf}
Let $A_0$ be a finitely generated monoid, $A$ a graded monoid finitely generated by $A_1$ over $A_0$, and $\F$ a coherent sheaf on $X=\mproj(A)$. Then there exists $n_0$ such that $\F(n)$ is generated by finitely many global sections for all $n \geq n_0$.
\end{theorem}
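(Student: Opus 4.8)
The plan is to mimic the classical argument (Hartshorne II.5.17) but exploit the combinatorial simplicity of the monoid setting, in particular the fact that a finitely generated $A_{(f)}$-module has finitely many generators and that $A$ is generated in degree $1$. Let $x_1,\dots,x_r$ be the nonzero elements of $A_1$ (finite since $A_0$ is finitely generated and $A$ is finitely generated by $A_1$ over $A_0$; more precisely $A_1$ is a finitely generated $A_0$-set, but for the cover all we need is a finite set of elements whose $D_+$'s cover $X$, which we may extract). The $D_+(x_i)$ form a finite affine open cover of $X$. On each $D_+(x_i)$ the sheaf $\F$ restricts to $\wt{M_i}$ for a finitely generated $A_{(x_i)}$-module $M_i = \Gamma(D_+(x_i),\F)$; pick finitely many generators $s_{i,1},\dots,s_{i,k_i} \in M_i$ of $M_i$ over $A_{(x_i)}$.

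First I would apply part (2) of Lemma \ref{sections_extend} to each generator $s_{i,j}$: there is a positive integer $m_{i,j}$ such that $x_i^{m_{i,j}} s_{i,j}$ extends to a global section $t_{i,j} \in \Gamma(X,\F(m_{i,j}))$. Let $n_1 = \max_{i,j} m_{i,j}$. For any $n \geq n_1$ and any $i,j$, multiplying $t_{i,j}$ by the global section $x_i^{\,n-m_{i,j}}$ of $\mc{O}_X(n-m_{i,j})$ (using $A_1 \subset \Gamma(X,\mc{O}_X(1))$ and the isomorphisms $\F(a)\otimes\mc{O}_X(b)\simeq\F(a+b)$ from the Lemma preceding Definition of $\Gamma_*$) produces a global section $t_{i,j}^{(n)} \in \Gamma(X,\F(n))$ whose restriction to $D_+(x_i)$ equals $x_i^{\,n} s_{i,j}$ under the canonical trivialization $\F(n)|_{D_+(x_i)} \simeq \F|_{D_+(x_i)}$ given by ``dividing by $x_i^n$'' (the map $\rho_{i,n}^{-1}$ of Lemma \ref{sections_extend}). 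Since $x_i$ is invertible in $A_{(x_i)}$, multiplication by $x_i^n$ is an automorphism of the stalk $\F_\p$ for every $\p \in D_+(x_i)$, so the elements $x_i^{n} s_{i,j}$ still generate $\F|_{D_+(x_i)}$; hence the finite family $\{t_{i,j}^{(n)}\}_{i,j}$ generates $\F(n)$ on the affine piece $D_+(x_i)$.

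The remaining point is that this single finite family of global sections of $\F(n)$ generates $\F(n)$ at every point of $X$, not merely on one chart at a time. Since the $D_+(x_i)$ cover $X$ and for each $i$ the subfamily $\{t_{i,j}^{(n)}\}_j$ already generates $\F(n)|_{D_+(x_i)}$ (as just argued, after transporting through $\rho_{i,n}$), the union $\{t_{i,j}^{(n)}\}_{i,j}$ generates every stalk $\F(n)_\p$. Taking $n_0 = n_1$ finishes the proof: for all $n \geq n_0$, $\F(n)$ is generated by the finitely many global sections $t_{i,j}^{(n)}$, $1\le i\le r$, $1\le j\le k_i$.

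I expect the only genuine subtlety — and the step to write carefully — to be the bookkeeping that identifies $\rho_{i,n}^{-1}$ of the generators $s_{i,j}$ with the restriction of the globally-extended, uniformly-twisted section $t_{i,j}^{(n)}$, i.e. checking that multiplying the extension $t_{i,j}$ by the scalar $x_i^{\,n-m_{i,j}}$ is compatible with the local trivializations on $D_+(x_i)$; this is exactly the compatibility already exploited in the proof of Lemma \ref{sections_extend}, so it is routine but must be stated. Everything else — finiteness of the cover, finite generation of each $M_i$, invertibility of $x_i$ on $D_+(x_i)$ — is immediate from the hypotheses and the cited results.
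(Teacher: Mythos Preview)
Your proposal is correct and follows essentially the same approach as the paper: take a finite set of degree-one generators $f_1,\dots,f_r$ giving the affine cover $\{D_+(f_i)\}$, pick finitely many local generators $s_{ij}$ of $\F|_{D_+(f_i)}$, apply Lemma~\ref{sections_extend}(2) to extend each $f_i^{n}s_{ij}$ to a global section of $\F(n)$ for $n$ large, and observe that since $f_i$ is invertible on $D_+(f_i)$ the twisted sections still generate there. The paper's proof is terser and absorbs your bookkeeping about uniformizing the twists $m_{i,j}$ to a common $n_0$ into a single sentence, but the argument is the same.
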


\begin{proof}
Let $f_1, \cdots, f_r \in A_1$ be a set of generators for $A$ over $A_0$. Since $\F$ is coherent, there is for each $i=1,\cdots, r$ an $m_i \geq 0$ and $s_{i1}, \cdots, s_{im_{i}} \in 
\Gamma((D_+ (f_i), \F)$ which generate $\F \vert_{D_+ (f_i)}$. By Lemma \ref{sections_extend}, there is an $n_0 \geq 0$ such that $f^{n_0} s_{ij}$ extend to global sections in $\Gamma(X,\F(n_0))$ for $1 \leq i \leq r, 0 \leq j \leq m_i$. Since $$\Gamma((D_+(f_i), \F(n)) = f^n_i \otimes \Gamma(D_+(f_i), \F), $$ it follows that $f^n s_{ij}$ generate $\F(n)$ for all $n \geq n_0$.
\end{proof}

\medskip

\begin{corollary} \label{maincor}
With the hypotheses of Theorem \ref{fg_sheaf}, there exist integers $m \in \mathbb{Z}, k \geq 0$, such that $\F$ is a quotient of $\mc{O}_X(m)^{\oplus k}$
\end{corollary}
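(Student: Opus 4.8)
The plan is to deduce the corollary directly from Theorem \ref{fg_sheaf} together with the first main theorem and the behavior of twisting. First I would apply Theorem \ref{fg_sheaf} to obtain an integer $n_0$ such that $\F(n_0)$ is generated by finitely many global sections, say $s_1, \dots, s_k \in \Gamma(X, \F(n_0))$. Each global section $s_j$ corresponds to a morphism $\mc{O}_X \to \F(n_0)$, and assembling these gives a morphism $\mc{O}_X^{\oplus k} \to \F(n_0)$. The statement that $\F(n_0)$ is generated by the $s_j$ means precisely that on each stalk $\F(n_0)_x$ is generated as an $\mc{O}_{X,x}$-module by the images $s_{j,x}$; since surjectivity of a sheaf map can be checked on stalks, this morphism $\mc{O}_X^{\oplus k} \to \F(n_0)$ is surjective. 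Hence $\F(n_0)$ is a quotient of $\mc{O}_X^{\oplus k}$.

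Next I would untwist. Tensoring the surjection $\mc{O}_X^{\oplus k} \twoheadrightarrow \F(n_0)$ with the invertible sheaf $\mc{O}_X(-n_0)$ and using part (3) of the Lemma on twists (so that $\F(n_0) \otimes \mc{O}_X(-n_0) \simeq \F$ and $\mc{O}_X \otimes \mc{O}_X(-n_0) \simeq \mc{O}_X(-n_0)$, together with the fact that $\oplus$ and $\otimes$ commute for $\mc{O}_X$-modules as recorded in Section \ref{coh}), we obtain a morphism $\mc{O}_X(-n_0)^{\oplus k} \to \F$. One needs that tensoring with an invertible sheaf preserves surjectivity; this holds because $\otimes$ with a locally free sheaf is exact on stalks (locally it is $\otimes_{\mc{O}_{X,x}} \mc{O}_{X,x}$, which changes nothing), so surjectivity is again checked stalk-wise. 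Setting $m = -n_0$ then gives the desired conclusion that $\F$ is a quotient of $\mc{O}_X(m)^{\oplus k}$.

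The one point requiring a little care — and the only plausible obstacle — is the claim that a sheaf map is surjective if and only if it is surjective on stalks, in the category of $\mc{O}_X$-modules on a monoid scheme. In the setting of sheaves of pointed sets with monoid action this is the analogue of the standard fact for sheaves of modules, and it follows from the definition of the image sheaf as the sheafification of the presheaf image together with the fact that sheafification and stalks commute; I would either invoke this directly or cite \cite{CLS}. Granting this, the corollary is a two-line formal consequence. I do not anticipate any further difficulty: everything else is the purely formal manipulation of the twisting isomorphisms already established in the preceding lemma.
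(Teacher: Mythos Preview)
Your proposal is correct and follows essentially the same approach as the paper: use Theorem~\ref{fg_sheaf} to obtain a surjection $\mc{O}_X^{\oplus k}\twoheadrightarrow\F(n)$ for some $n$, then tensor with the locally free sheaf $\mc{O}_X(-n)$ to untwist. The paper's proof is terser but identical in structure, noting only that tensoring with a locally free sheaf preserves surjectivity on stalks; your additional remarks about checking surjectivity stalkwise and invoking the twisting lemma simply make explicit what the paper leaves implicit.
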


\begin{proof}
By Theorem \ref{fg_sheaf}, for large enough $n$, $\F(n)$ is generated by global sections, say $k$ of them $s_1, \cdots s_k$. We thus have a surjection of $\mc{O}_X$--modules
\[
\mc{O}^{\oplus k}_X \rightarrow \F(n) \rightarrow 0
\]
Tensoring this sequence with $\mc{O}_X (-n)$, which is locally free, preserves surjectivity on stalks, and we obtain 
\[
\mc{O}_X (-n)^{\oplus k} \rightarrow \F \rightarrow 0,
\]
proving the result. 
\end{proof}





\subsection{Finiteness of global sections of coherent sheaves}

One of the key results about coherent sheaves on projective schemes is the finite-dimensionality of the space of global sections. In this section,  we prove the $\fun$--analog of this, in the case that case that $A_0 = \{ 0, 1\}$.

\begin{theorem} \label{mainthm3}
Let $A$ be a graded monoid finitely generated by $A_1$ over $A_0=\langle 0,1 \rangle$, and $\F$ a coherent sheaf on $X=\mproj(A)$.Then $\Gamma(X,\F)$ is a finite pointed set. 
\end{theorem}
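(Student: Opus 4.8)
The plan is to reduce the statement to a concrete bound on the number of nonzero elements of the graded module $M = \Gamma_*(\mc{F})$ in degree $0$, exploiting that the degree-$0$ part $A_0 = \langle 0,1\rangle$ is trivial, so that ``finiteness of a pointed set'' over $A_0$ is genuinely a cardinality statement. By Theorem~\ref{mainthm1}, $\mc{F} \simeq \wt{M}$ with $M = \Gamma_*(\mc{F})$, and $\Gamma(X,\mc{F}) = M_0$, the degree-zero component; so it suffices to show $M_0$ is a finite pointed set. Fix generators $f_1,\dots,f_r \in A_1$ of $A$ over $A_0$, giving the finite affine cover $X = \bigcup_i D_+(f_i)$ with $\mc{O}_X|_{D_+(f_i)} \simeq \wt{A_{(f_i)}}$. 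Since $\mc{F}$ is coherent, each $\mc{F}|_{D_+(f_i)} \simeq \wt{N_i}$ with $N_i$ a finitely generated $A_{(f_i)}$-module; pick finite generating sets.

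The key step is to bound the support of a global section along the cover. A global section $s \in \Gamma(X,\mc{F})$ is a compatible tuple $(s|_{D_+(f_i)})_i$, where each $s|_{D_+(f_i)} \in N_i$. The restriction map $\Gamma(X,\mc{F}) \to N_i$, $s \mapsto s|_{D_+(f_i)}$, is injective on the subset of sections not killed when passing to $D_+(f_i)$ — and because the $D_+(f_i)$ cover $X$ and a nonzero global section must be nonzero on some stalk, hence nonzero in some $N_i$, the map $\Gamma(X,\mc{F}) \setminus \{*\} \hookrightarrow \bigsqcup_i N_i$ obtained by recording, for each $s$, the tuple of its restrictions, is injective. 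Thus $|\Gamma(X,\mc{F})| \leq \sum_i |N_i|$ once we know each $N_i$ is finite. So the whole theorem reduces to: \emph{a finitely generated $A_{(f_i)}$-module is a finite pointed set}. This in turn reduces, since finitely generated means a quotient of $\bigoplus^{n} A_{(f_i)}$ and quotients only shrink, to showing each $A_{(f_i)}$ is itself a finite monoid. But here is the catch — this is \emph{false} in general; e.g. for $A = \langle t_0,t_1\rangle$, $A_{(t_0)} = \langle t_1/t_0 \rangle$ is the infinite monoid $\N$. So a naive cardinality bound on $N_i$ cannot be the mechanism.

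The correct mechanism, and the step I expect to be the main obstacle, is to use the \emph{grading} to cut down $N_i$ to a finite piece. The point is that we do not need all of $N_i$ but only the global sections, which are the degree-$0$ part of the graded module $M$; and the gluing condition forces strong constraints. The plan: realize $M_0 \subset \prod_i N_i$ as before, but now observe that an element of $M_0$ restricts on $D_+(f_i)$ to an element of $\mc{F}(0)_{(f_i)} = M_{(f_i)}$, and by Lemma~\ref{sections_extend} compatible restrictions that agree after multiplying by a power of $f_i$ must actually come from genuinely bounded data. Concretely, write $M$ itself as finitely generated over $A$ (possible after replacing $M$ by $M_{\geq d}$, using the Remark and Lemma following Theorem~\ref{mainthm1}, together with Theorem~\ref{fg_sheaf}/Corollary~\ref{maincor} to present $\mc{F}$ as a quotient of $\bigoplus_j \mc{O}_X(m_j)$ with $M$ a quotient of $\bigoplus_j A(m_j)$). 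Then $M_0$ is a quotient of $\bigoplus_j A(m_j)_0 = \bigoplus_j A_{m_j}$, so it suffices to show each graded piece $A_n$ is a finite pointed set. Since $A$ is generated by the finite set $A_1$ over $A_0 = \langle 0,1\rangle$, every element of $A_n$ is a monomial $f_{i_1}\cdots f_{i_n}$ in the $f_i$'s (with coefficient $1 \in A_0$), so $|A_n| \leq r^n < \infty$. Hence $M_0$ is finite, and $\Gamma(X,\mc{F}) = M_0$ is a finite pointed set. The main obstacle is the bookkeeping needed to legitimately replace $\mc{F}$ by a quotient of a finite sum of twisting sheaves and to track that the induced map on degree-$0$ global sections stays surjective — this uses right-exactness of $\Gamma_*$-type constructions in this setting, which must be checked from the properties of $\Amod$ (in particular that quotients of finite pointed sets are finite), rather than taken for granted.
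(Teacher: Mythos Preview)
Your proposal has a genuine gap at precisely the step you flag as ``the main obstacle'': the claim that the surjection $\bigoplus_j \mc{O}_X(m_j) \twoheadrightarrow \F$ from Corollary~\ref{maincor} induces a surjection $\bigoplus_j A_{m_j} \to \Gamma(X,\F)$ on global sections. This is not bookkeeping --- it is \emph{false}, because $\Gamma$ is not right-exact in this setting either. On $X = \mathbb{P}^1 = \mproj(\langle t_0, t_1\rangle)$, the map $\mc{O}_X(-1)^{\oplus 2} \to \mc{O}_X$ given by multiplication by $t_0$ on the first summand and $t_1$ on the second is surjective on stalks (since $t_i$ is a unit on $D_+(t_i)$), but on global sections it is the map $\{*\} = A_{-1}^{\oplus 2} \to A_0 = \{0,1\}$, which misses $1$. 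The natural repair --- replacing $M = \Gamma_*(\F)$ by the finitely generated image $M' \subset M$ of $\bigoplus_j A(m_j)$ --- does give $\wt{M'} \simeq \F$ with $M'_0$ finite, but then you lose the identification $\Gamma(X,\F) = M'_0$; the tilde functor does not remember the degree-zero piece. So the reduction to finiteness of the graded pieces $A_n$ does not go through.

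The paper's proof is entirely different and sidesteps any exactness claim for $\Gamma$. It base-changes to a field $K$ and imports the classical fact that $H^0(X_K,\F_K)$ is finite-dimensional for the honest projective $K$-scheme $X_K = \on{Proj}(K[A])$. Assuming $\Gamma(X,\F)$ infinite, one of the restriction images $T_i \subset \F(D_+(f_i))$ is infinite; its elements, being distinct basis vectors of $K[\F(D_+(f_i))]$, yield infinitely many linearly independent coefficient functionals on $K[\Gamma(X,\F)]$ that all vanish on $\ker\phi_K$. This forces $\ker\phi_K$ to have infinite codimension, contradicting $\dim_K \Gamma(X_K,\F_K) < \infty$. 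Your attempted argument is purely internal to the $\fun$-world, which would be pleasant if it worked; the paper instead leans on ordinary algebraic geometry for the finiteness input.
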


\begin{proof}
Let $K$ be a field. Base changing to $\on{Spec}(K)$ yields a coherent sheaf $\F_K$ over the Noetherian projective scheme $X_K$, and so $\Gamma(X_K, F_K) = H^0(X_K,F_K)$ is a finite-dimensional vector space over $K$. Consider the base-change map on global sections
\[
\phi_K:  K[\Gamma(X, \F)] \rightarrow \Gamma(X_K, F_K) 
\]
The codomain of $\phi_K$ is finite-dimensional, and so $Ker(\phi_K)$ must have finite co-dimension in  $\Gamma(X, \F) \otimes K$. Suppose now that $\Gamma(X,\F)$ is an infinite pointed set. Let $f_1, \cdots, f_r \in A_1$ be a set of generators for $A$. $D_+ (f_i)$ therefore form a finite affine cover of $X$. Let $T_i \subset \Gamma(D_+(f_i))$ denote the image of the restriction map $\Gamma(X,\F) \rightarrow \Gamma(D_+ (f_i), \F)$. We have 
\[
\Gamma(X,\F) \subset T_1 \times T_2 \times \cdots \times T_r
\]
and since $\Gamma(X,\F)$ is infinite, at least one of the $T_i$, say $T_1$, is infinite. Let $$T_1=\{ s_1, s_2, \cdots \}.$$ Given $s \in \Gamma(X,\F) \otimes K$, $s \vert D_+ (f_1)$ can be uniquely written as a linear combination of $s_1, s_2, \cdots$. We therefore have maps
\[
C_{s_i}: K[ \Gamma(X,\F)  \rightarrow K
\] 
where $C_{s_i} (s)$ is the coefficient of $s_i$ in $ s \vert D_+ (f_1)$. We have
\[
Ker(\Phi_K) \subset \cap^{\infty}_{i=1} Ker(C_{s_i}) 
\]
Moreover, the chain of subspaces
\[
Ker(C_{s_1}) \supset Ker(C_{s_1}) \cap  Ker(C_{s_2}) \supset  Ker(C_{s_1}) \cap  Ker(C_{s_2}) \cap Ker(C_{s_3}) \supset \cdots
\]
is non-stationary. It follows that $Ker(\phi_K)$ is of infinite codimension, contradicting the fact that $Im(\phi_K)$ is finite-dimensional. 
\end{proof}
\bigskip
\begin{rmk}
 As demonstrated by Example (\ref{firstexample}), $Ker(\phi_K)$ is non-zero in general. 
\end{rmk}

\section{Classification of coherent sheaves on $\mathbb{P}^1$}

In this section, we undertake the classification of coherent sheaves on $\mathbb{P}^1 = \mproj( \langle t_0, t_1 \rangle )$, the simplest projective monoid scheme. A coherent sheaf $\F$ on $\mathbb{P}^1$ is obtained by gluing coherent sheaves on two copies of $\mathbb{A}^1$ along their intersection, so we begin there. 

\begin{rmk}
In \cite{Sz1}, a certain subcategory of \emph{normal sheaves} of $QCoh(\mathbb{P}^1)$ was considered, and used to define the Hall algebra of $\mathbb{P}^1. $
\end{rmk}

\subsection{Coherent sheaves on $\mathbb{A}^1$}

A coherent sheaf $\F$ on $\mathbb{A}^1$ can be described uniquely as $\F = \widetilde{M}$, where $M$ is a finitely generated $\langle t \rangle$--module. We may associate to $M$ a directed graph $\Gamma_M$ whose vertices are the underlying set of $M \backslash *_{M}$, with directed edges from $m$ to $t \cdot m$ for every $m \in M \backslash *_M$. $\Gamma_M$ thus completely describes the isomorphism class of $M$. We note that every vertex of $\Gamma_M$ has at most one out-going edge, and call a vertex a \emph{leaf} if it has no incoming edges, and a \emph{root} if it has no outgoing edges. It follows that elements of $M$ corresponding to leaves of $\Gamma_M$ form a minimal system of generators for $M$ as a $\langle t \rangle$--module. If $M, N$ are $\langle t \rangle$--modules, then $\Gamma_{M\oplus N} = \Gamma_M \amalg \Gamma_N$ - i.e. direct sums of $\langle t \rangle$--modules (or equivalently coherent sheaves on $\mathbb{A}^1$ correspond to disjoint unions of graphs). In view of these observations, the following lemma is obvious:

\begin{lemma}
Let $M$ be a $\langle t \rangle$--module, and $\F = \widetilde{M}$ the corresponding coherent sheaf on $\mathbb{A}^1$. Then
\begin{enumerate}
\item $\F$ is indecomposable iff $\Gamma_M$ is connected.
\item $\Gamma_M$ has finitely many leaves. 
\end{enumerate}
\end{lemma}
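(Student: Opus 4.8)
The plan is to translate everything into the language of the directed graph $\Gamma_M$, since the lemma is essentially a structural statement about $M$ as a $\langle t\rangle$-module. Recall that $M$ is finitely generated, so there is a finite set of generators $m_1,\dots,m_n$, and every nonzero element of $M$ has the form $t^k m_i$ for some $i$ and some $k\geq 0$. Thus every vertex of $\Gamma_M$ lies on a directed path emanating from (the vertex of) some generator, which already shows $\Gamma_M$ has only finitely many connected components and that each component is ``reachable'' from finitely many vertices.

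For part (1), I would argue both directions. If $\Gamma_M$ is disconnected, write its vertex set as a disjoint union of two nonempty unions of components $V' \sqcup V''$; each of $V'\cup\{*_M\}$ and $V''\cup\{*_M\}$ is closed under the $\langle t\rangle$-action (an out-edge $m\to t\cdot m$ stays within a component, and any $m$ with $t\cdot m = *_M$ causes no problem since $*_M$ is shared), hence is a $\langle t\rangle$-submodule, and $M = M' \oplus M''$ with $M',M''$ nonzero; passing to $\widetilde{\cdot}$ gives $\F = \F'\oplus\F''$, so $\F$ is decomposable. Conversely, if $\F = \F'\oplus\F''$ nontrivially, then $M \cong M'\oplus M''$ as $\langle t\rangle$-modules (using the equivalence of categories between coherent sheaves on $\mathbb{A}^1 = \mspec\langle t\rangle$ and finitely generated $\langle t\rangle$-modules, together with $\widetilde{M'\oplus M''} = \widetilde{M'}\oplus\widetilde{M''}$), and then $\Gamma_M = \Gamma_{M'}\amalg\Gamma_{M''}$ is disconnected. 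The only mild subtlety is checking that no edge of $\Gamma_M$ connects a vertex of $M'\setminus *$ to one of $M''\setminus *$, which is immediate because $t\cdot m$ lies in the same summand as $m$.

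For part (2), recall that a leaf is a vertex with no incoming edge, i.e.\ a nonzero element $m$ that is not of the form $t\cdot m'$ for any nonzero $m'$; equivalently, $m$ is not in the image of multiplication by $t$. As noted in the text, the leaves form a minimal generating set, so it suffices to observe that a finitely generated module cannot have an infinite minimal generating set: if $m_1,\dots,m_n$ generate $M$, then every element, in particular every leaf, equals $t^k m_i$ for some $i,k$; but a leaf is not a proper multiple of anything, forcing $k=0$, so every leaf is among the $m_i$, giving at most $n$ leaves. I would phrase this cleanly by noting the set of leaves is contained in any generating set.

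I do not expect a genuine obstacle here — the lemma is labelled ``obvious'' and the work is purely bookkeeping. The one point deserving a sentence of care is part (4) of the list of properties of $\Amod$, or rather its underlying combinatorial content: an $A$-submodule of $M$ corresponds exactly to a union of connected components of $\Gamma_M$ that is ``downward closed'' along edges, but since each component is already closed under $t$ (edges never leave a component), \emph{every} union of components is a submodule; it is this fact that makes the correspondence between decompositions of $\F$ and disconnections of $\Gamma_M$ exact rather than merely one-directional. Everything else is a direct unwinding of definitions.
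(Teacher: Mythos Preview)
Your argument is correct. The paper itself offers no proof, declaring the lemma obvious in light of the preceding discussion (that $\Gamma_{M\oplus N} = \Gamma_M \amalg \Gamma_N$ and that leaves form a minimal generating set); your write-up simply makes those observations explicit and is exactly the intended justification.

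One small slip in your closing commentary: it is not true that every $\langle t\rangle$-submodule of $M$ corresponds to a union of connected components of $\Gamma_M$ --- for instance, $t\cdot\langle t\rangle \subset \langle t\rangle$ is a proper submodule even though $\Gamma_{\langle t\rangle}$ is connected. What you actually need, and correctly use in the body of the argument, is only the weaker fact that every union of components (together with the basepoint) is a submodule, so that disconnections of $\Gamma_M$ yield direct-sum decompositions of $M$; the converse direction comes from $\Gamma_{M'\oplus M''} = \Gamma_{M'}\amalg\Gamma_{M''}$, not from any classification of submodules. This does not affect the validity of the proof.
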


The classification of coherent sheaves on $\mathbb{A}^1$ amounts to the classification of the isomorphism classes of the graphs $\Gamma_M$ (up to isomorphism of directed graphs), which was undertaken in \cite{Sz2}. Since every  finitely generated $\langle t \rangle$--module can be uniquely expressed a  finite direct sum of indecomposable ones (up to reordering), it suffices to classify the latter.

\begin{definition} Let $\Gamma$ be a connected directed graph with finitely many leaves, and with each vertex having at most one out-going edge. We say that
\begin{enumerate}
\item $\Gamma$ is of { \emph type 1 } if it is a rooted tree - i.e. the underlying undirected graph of $\Gamma$ is a tree possessing a unique root, such there is a unique directed path from every vertex to the root (see {\bf Figure 1}). 
\item $\Gamma$ is of {\emph type 2} if it is obtained by joining a rooted tree to the initial vertex of $\Gamma_{\langle t \rangle}$ (see {\bf Figure 2}). 
\item $\Gamma$ is of \emph{type 3} if if obtained from a directed cycle by attaching rooted trees ( see {\bf Figure 3}) to an oriented cycle. 
\end{enumerate}
\end{definition}

\begin{minipage}{.5\textwidth}
\begin{center}
\begin{tikzpicture}
\draw [ultra thick,->] (0,0) -- (0.9,0.9);
\draw [fill] (0,0) circle [radius=0.1];
\draw [fill] (1,1) circle [radius=0.1];
\draw [ultra thick,->] (2,0) -- (1.1,0.9);
\draw [fill] (2,0) circle [radius=0.1];
\draw [ultra thick,->] (1,1) -- (1.9,1.9);
\draw [fill] (2,2) circle [radius=0.1];
\draw [fill] (4,0) circle [radius=0.1];
\draw [fill] (3,1) circle [radius=0.1];
\draw [ultra thick,->] (4,0) -- (3.1, 0.9);
\draw [ultra thick,->] (3,1) -- (2.1,1.9);
\draw [fill] (2,1) circle [radius=0.1];
\draw [ultra thick,->] (2,1) -- (2,1.9);
\draw [fill] (2,3) circle [radius=0.1];
\draw [ultra thick,->] (2,2) -- (2,2.9);
\draw [fill] (3,2) circle [radius=0.1];
\draw [ultra thick,->] (3,2) -- (2.1,2.9);
\draw [ultra thick,->] (2,3) -- (2,3.9);
\draw [fill] (2,4) circle [radius=0.1];
\node at (2,-1) {Figure 1};
\end{tikzpicture}
\end{center}
\end{minipage}
\begin{minipage}{.5\textwidth}
\begin{center}
\begin{tikzpicture}
\draw [ultra thick,->] (0,0) -- (0.9,0.9);
\draw [fill] (0,0) circle [radius=0.1];
\draw [fill] (1,1) circle [radius=0.1];
\draw [ultra thick,->] (2,0) -- (1.1,0.9);
\draw [fill] (2,0) circle [radius=0.1];
\draw [ultra thick,->] (1,1) -- (1.9,1.9);
\draw [fill] (2,2) circle [radius=0.1];
\draw [fill] (4,0) circle [radius=0.1];
\draw [fill] (3,1) circle [radius=0.1];
\draw [ultra thick,->] (4,0) -- (3.1, 0.9);
\draw [ultra thick,->] (3,1) -- (2.1,1.9);
\draw [ultra thick,->] (2,2) -- (2,2.9);
\draw [fill] (2,3) circle [radius=0.1];
\draw [ultra thick,dotted,->] (2,3) -- (2,4);
\node at (2,-1) {Figure 2};
\end{tikzpicture}
\end{center}
\end{minipage}

\begin{center}
\begin{tikzpicture}
\draw [ultra thick,->] (0,0) -- (0.9,0);
\draw [ultra thick,->] (1,0) -- (1,0.9);
\draw [ultra thick,->] (1,1) -- (0.1,1);
\draw [ultra thick,->] (0,1) -- (0,0.1);
\draw [ultra thick,->] (0,-1) -- (0,-0.1);
\draw [ultra thick,->] (-1,-1) -- (-0.1,-0.1);
\draw [ultra thick,->] (2,2) -- (1.1,1.1);
\draw [ultra thick,->] (2,3) -- (2,2.1);
\draw [ultra thick,->] (3,2) -- (2.1,2);
\draw [fill] (0,0) circle [radius=0.1];
\draw [fill] (1,1) circle [radius=0.1];
\draw [fill] (1,0) circle [radius=0.1];
\draw [fill] (0,1) circle [radius=0.1];
\draw [fill] (-1,-1) circle [radius=0.1];
\draw [fill] (0,-1) circle [radius=0.1];
\draw [fill] (2,2) circle [radius=0.1];
\draw [fill] (2,3) circle [radius=0.1];
\draw [fill] (3,2) circle [radius=0.1];
\node at (1,-2) {Figure 3};
\end{tikzpicture}
\end{center}

We then have the following classification result:

\begin{theorem}
Let $M$ be a non-trivial finitely generated indecomposable $\langle t \rangle$--module. Then $\Gamma_M$ is of type 1, 2, or 3. 
\end{theorem}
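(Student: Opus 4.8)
The plan is to translate the combinatorial structure of $\Gamma_M$ into the three types by analyzing the directed-graph structure imposed by the hypotheses. Recall that $\Gamma_M$ is connected (since $M$ is indecomposable), has finitely many leaves, and every vertex has at most one out-going edge. First I would observe that the ``at most one outgoing edge'' condition means that from any vertex $m$, the forward orbit $m, tm, t^2m, \dots$ is a well-defined directed path (a ``ray''), and two such rays, once they meet, coincide forever. So the key dichotomy is: does some forward ray eventually enter a directed cycle, or not?

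\textbf{Case A: no forward ray meets a cycle.} Then I claim $\Gamma_M$ is a tree (type 1 or type 2). The argument: since there are no cycles, the relation ``$m \leq m'$ if $m'$ lies on the forward ray of $m$'' is a partial order, and connectedness plus finitely many leaves should force the underlying undirected graph to be a tree with a well-defined ``trunk''. I would argue that all the (finitely many) forward rays of the leaves must eventually merge into a single common ray $R$ (if two rays stayed disjoint forever, removing a suitable edge would disconnect $M$ into two submodules, contradicting indecomposability — this is exactly property (4) of $\Amod$, that submodules of $M' \oplus M''$ split). Once everything funnels into a single ray $R$, there are two subcases: either $R$ terminates (reaches a root, i.e.\ a vertex with $tm = *_M$ eventually) — then $\Gamma_M$ is a finite rooted tree, type 1 — or $R$ is infinite, i.e.\ $t^n m \neq *_M$ for all $n$ along the trunk, in which case $\Gamma_M$ is a rooted tree joined onto a copy of $\Gamma_{\langle t \rangle}$, which is type 2.

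\textbf{Case B: some forward ray enters a directed cycle $C$.} Then $C$ is unique: any cycle is a forward-invariant set, and if there were two disjoint cycles, connectedness would give a path between them, but that path's forward ray would have to flow into both cycles, impossible since rays merge and stay merged. So there is exactly one cycle $C$, every vertex's forward ray eventually lands on $C$, and the ``backward'' structure hanging off each vertex of $C$ is, by the same no-extra-cycles reasoning and finiteness of leaves, a finite rooted tree (the roots being the attachment points on $C$). That is precisely type 3. I should also dispose of the degenerate possibility that $\Gamma_M$ is a single cycle with nothing attached — that is the case where the trees are all trivial, still type 3.

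The main obstacle I anticipate is making rigorous the ``rays must merge, else $M$ decomposes'' step: one must carefully exhibit, when two forward rays (or two components after deleting an edge) stay disjoint, an honest decomposition $M = M' \oplus M''$ as $\langle t \rangle$-modules and invoke indecomposability — being careful about the basepoint $*_M$ and about edges pointing \emph{to} $*_M$ (roots). A secondary point of care is the finiteness of the trees hanging off the trunk/cycle: this uses ``finitely many leaves'' together with the fact that each vertex has a unique forward path, so each tree is a finite in-tree; one should check no infinite backward path can occur without creating infinitely many leaves or an extra ray that fails to merge. Once these two points are handled, matching to the three figures is essentially bookkeeping.
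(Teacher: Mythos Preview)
Your plan is sound and follows the same skeleton as the paper's proof: split on whether $\Gamma_M$ contains a cycle, argue that any cycle is unique, and in the acyclic case show that the leaves' forward rays merge into a common trunk (type 1 if the trunk terminates, type 2 if not). The paper is terser only because it delegates the finite case and the ``at most one cycle'' fact to \cite{Sz2} and then disposes of the remaining infinite acyclic case in three lines using exactly your merging argument (``there exist $n,n'$ with $t^n m = t^{n'} m'$''); you are in effect reconstructing what \cite{Sz2} supplies. The paper's slicing is finite/infinite rather than cycle/no-cycle, but the content is the same.

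Two points to sharpen. First, your proposed reason for excluding infinite backward chains --- that they would ``create infinitely many leaves'' --- does not work as stated: in a chain $v_1 \leftarrow v_2 \leftarrow \cdots$ each $v_i$ has $v_{i+1}$ pointing into it, so none of the $v_i$ is a leaf. Use finite generation directly instead: write each $v_i = t^{k_i} m_{j_i}$ for one of finitely many generators, apply pigeonhole to land infinitely many distinct $v_i$ in a single generator's forward orbit, and derive a contradiction (the exponents go to $-\infty$ if that orbit is injective, or the $v_i$ must repeat if it is eventually periodic). Second, for ``rays must merge'' you do not need to delete an edge and manufacture a decomposition $M' \oplus M''$: a short induction along any undirected path from $u$ to $v$ in an out-degree-$\leq 1$ graph shows that the forward orbits of $u$ and $v$ already intersect, so connectedness of $\Gamma_M$ by itself yields the merging. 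With these two fixes your outline goes through.
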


\begin{proof}
When $\Gamma_M$ is a finite graph, It is shown in \cite{Sz2} that it must be of type 1 or 3. We may therefore assume that $\Gamma_M$ is infinite. It is proven in \cite{Sz2} that $\Gamma_M$ contains at most one cycle - necessarily oriented. However, it is clear that if $\Gamma_M$ has a cycle and finitely many leaves, it must be finite.  $\Gamma_M$ is therefore an infinite tree. If $m, m' \in M$ are elements corresponding to leaves of $\Gamma_M$ (and are therefore members of a minimal generating set of $M$), there are $n, n'$, such that $t^n \cdot m = t^{n'} \cdot m'$. Consequently, there is a vertex $v$ of $\Gamma_M$ such that every directed path starting at a leaf eventually passes through $v$. $\Gamma_M$ is then of type 2. 
\end{proof}

Note that a finitely generated $\langle t \rangle$--module $M$ is torsion iff every connected component of $\Gamma_M$ is of type 1, and torsion-free iff every connected component is of type 2 or 3. 

\begin{rmk}
While a type 3 sheaf $\F$ is torsion-free over $\fun$, its base-change $\F_k$ to a field $k$ with sufficiently many roots of unity (in particular, if $k$ is algebraically closed) is a torsion sheaf, supported at $0$ and roots of unity. 
\end{rmk}

\subsection{Coherent sheaves on $\mathbb{P}^1$}

Specifying a coherent sheaf $\F$ on $\mathbb{P}^1$ amounts to specifying coherent sheaves $\F', \F''$ on $$U_1 = \mspec(\langle t \rangle) \simeq \mathbb{A}^1 \textrm{ and } U_2 = \mspec(\langle t^{-1} \rangle) \simeq \mathbb{A}^1$$ respectively, together with a gluing isomorphism $$\varphi: \F' \vert_{U_1 \cap U_2} \simeq \F'' \vert_{U_1 \cap U_2}$$ on $U_1 \cap U_2 = \mspec(\langle t, t^{-1} \rangle)$. We denote the defining tripe  of $\F$ by $(\F', \F'', \phi)$. 

Note that $\langle t, t^{-1} \rangle$ is the infinite cyclic group $\mathbb{Z}$ with a zero element adjoined. Coherent sheaves on $\mspec(\langle t, t^{-1} \rangle)$ therefore correspond to finitely generated $\mathbb{Z}$--sets, and indecomposable coherent sheaves to $\mathbb{Z}$--orbits. We therefore have:

\begin{lemma} The incdecomposable coherent sheaves on $U_1 \cap U_2 =  \mspec(\langle t, t^{-1} \rangle)$ are of the form $\widetilde{N}$, where $N$ is a $\langle t, t^{-1} \rangle$--module of the form $\langle t, t^{-1} \rangle$ or $\langle t, t^{-1} \rangle /  \langle t^k, t^{-k} \rangle$. We denote by $\mc{L}$ and $\mc{C}_k$ the corresponding coherent sheaves.

\end{lemma}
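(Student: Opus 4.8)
The plan is to reduce the statement to the elementary orbit decomposition of finitely generated $\mathbb{Z}$-sets. First I would observe that since $\langle t, t^{-1} \rangle$ is the group $\mathbb{Z}$ with a zero element adjoined, an $\langle t, t^{-1} \rangle$--module $N$ is the same data as a pointed set $(N, *_N)$ carrying commuting actions of $t$ and $t^{-1}$ with $t\cdot t^{-1} = t^{-1}\cdot t = 1$ acting as the identity. Thus multiplication by $t$ is a bijection of $N$ with inverse multiplication by $t^{-1}$, and (as for any monoid module) it fixes the base point, since $t \cdot *_N = t\cdot(0\cdot m) = 0\cdot m = *_N$. Deleting the base point, $N \setminus *_N$ is precisely a $\mathbb{Z}$-set, and this gives a bijection between isomorphism classes of $\langle t, t^{-1}\rangle$--modules and isomorphism classes of $\mathbb{Z}$-sets (an isomorphism of modules cannot send a nonzero element to $*_N$, so it restricts to a $\mathbb{Z}$-equivariant bijection of the nonzero parts).

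Next I would translate the conditions ``coherent'' and ``indecomposable''. By the affine equivalence of categories recalled in Section \ref{coh}, coherent sheaves on $\mspec(\langle t,t^{-1}\rangle)$ correspond to finitely generated $\langle t,t^{-1}\rangle$--modules $N = \Gamma(\mspec(\langle t,t^{-1}\rangle), \widetilde N)$. A surjection $\oplus^{n} \langle t,t^{-1}\rangle \to N$ means every nonzero $m \in N$ equals $t^{j} m_i$ for one of $n$ fixed nonzero generators $m_i$, i.e. $N \setminus *_N$ is the union of the $\mathbb{Z}$-orbits of the $m_i$; conversely finitely many orbits give finitely many generators. So $\widetilde N$ is coherent iff $N \setminus *_N$ has finitely many $\mathbb{Z}$-orbits. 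For indecomposability, exactly as on $\mathbb{A}^1$ one uses property $(4)$ of $\Amod$ (see Remark \ref{subobjects}): if $N \setminus *_N$ is a disjoint union of two nonempty $\mathbb{Z}$-stable subsets $S_1, S_2$, then $N \simeq N_1 \oplus N_2$ with $N_i = S_i \sqcup \{*\}$. Hence a coherent $\widetilde N$ is indecomposable iff $N \setminus *_N$ consists of a single $\mathbb{Z}$-orbit.

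Finally I would invoke the classification of transitive $\mathbb{Z}$-sets: such a set is isomorphic to $\mathbb{Z}/H$ for $H$ the stabilizer of any chosen point, and the subgroups of $\mathbb{Z}$ are $\{0\}$ and $k\mathbb{Z}$ with $k \geq 1$. Translating back through the dictionary above, the free orbit corresponds to $N = \langle t, t^{-1}\rangle$, whose nonzero elements $\{t^{n}\}_{n\in\mathbb{Z}}$ form a single free $\mathbb{Z}$-orbit, giving $\mc{L}$; and the orbit $\mathbb{Z}/k\mathbb{Z}$ corresponds to $N = \langle t, t^{-1}\rangle / \langle t^{k}, t^{-k}\rangle$, whose nonzero elements are $\{t^{0}, \ldots, t^{k-1}\}$ with $t\cdot t^{k-1} = t^{0}$, giving $\mc{C}_k$. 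I do not anticipate a genuine obstacle; the only point needing a little care is the first paragraph's verification that the $t$-action on any module is automatically bijective and base-point preserving, which is exactly what makes the passage to $\mathbb{Z}$-sets clean, after which the result is just the standard orbit decomposition together with the two computations identifying $\mc{L}$ and $\mc{C}_k$.
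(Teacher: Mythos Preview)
Your proposal is correct and follows exactly the approach the paper itself takes: the paper states just before the lemma that $\langle t,t^{-1}\rangle$ is $\mathbb{Z}$ with a zero adjoined, so that coherent sheaves correspond to finitely generated $\mathbb{Z}$--sets and indecomposable ones to single $\mathbb{Z}$--orbits, and leaves the lemma without further proof. You have simply written out this argument in detail, including the verification that the $t$--action is bijective on nonzero elements and the standard classification of transitive $\mathbb{Z}$--sets via stabilizer subgroups.
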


The following result regarding the restriction of coherent sheaves from $\mathbb{A}^1$ to $U_1 \cap U_2$ is immediate:

\begin{lemma}
Suppose $\F=\widetilde{M}$ an indecomposable coherent sheaf on $\mathbb{A}^1$. Then
\begin{enumerate}
\item If $\Gamma_M$ is of type 1, then $\F \vert_{U_1 \cap U_2} \simeq 0$. 
\item If $\Gamma_M$ is of type 2, then $\F \vert_{U_1 \cap U_2} \simeq \mc{L}$. 
\item if $\Gamma_M$ is of type 3 with an oriented cycle of length $k$, then $\F \vert_{U_1 \cap U_2} \simeq \mc{C}_k$. 
\end{enumerate}
\end{lemma}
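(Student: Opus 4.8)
The plan is to work entirely at the level of modules, using the explicit description of localization of $\langle t\rangle$--modules at the multiplicative set $S_t = \{1,t,t^2,\dots\}$, since $\widetilde M\vert_{U_1\cap U_2} = \widetilde{M_t}$ where $M_t = S_t^{-1}M$ is regarded as a $\langle t,t^{-1}\rangle$--module. So in each of the three cases I would compute $M_t$ directly from the graph $\Gamma_M$ and identify it with $0$, $\langle t,t^{-1}\rangle$, or $\langle t,t^{-1}\rangle/\langle t^k,t^{-k}\rangle$ respectively.

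First, the key observation: an element $m/t^a \in M_t$ is zero if and only if $t^b\cdot m = *_M$ for some $b\ge 0$, i.e. if and only if $m$ lies on a directed path that eventually reaches $*_M$ — equivalently, if $m$ belongs to a type-1 component of $\Gamma_M$ (a rooted tree, where following out-edges from any vertex terminates at the root, which maps to $*$). This immediately gives part (1): if $\Gamma_M$ is of type 1, every element of $M$ is killed by a power of $t$, so $M_t = 0$ and $\widetilde M\vert_{U_1\cap U_2}\simeq 0$.

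For part (2), suppose $\Gamma_M$ is of type 2: an infinite tree obtained by joining a (finite) rooted tree to the ``infinite ray'' $\Gamma_{\langle t\rangle}$, so that all leaves have directed paths eventually merging and then continuing forever along an infinite tail $v \to t v \to t^2 v \to \cdots$ with no root. In $M_t$, for any $m$ on the tail we have $m = t^n v$ for the basepoint $v$ of the tail and suitable $n$ (or $v = m/t^n$ running the other way), and any leaf $\ell$ satisfies $t^N\ell = t^{N'}v$ for some $N,N'$, so $\ell/1 = v/t^{N-N'}$ in $M_t$; hence $M_t$ is the free rank-one $\langle t,t^{-1}\rangle$--module generated by $v/1$, with $t$ acting invertibly (no nonzero element is torsion because the tail is infinite and the tree part is killed off... actually the tree part is \emph{not} killed, it becomes identified with a negative power of $t$ times $v$). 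So $M_t\simeq\langle t,t^{-1}\rangle$ and $\widetilde M\vert_{U_1\cap U_2}\simeq\mc L$.

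For part (3), suppose $\Gamma_M$ is of type 3 with an oriented cycle $C$ of length $k$, with rooted trees attached. Any vertex $m$ on an attached tree flows, after finitely many applications of $t$, into the cycle; so in $M_t$ it is identified with $v/t^a$ for some cycle vertex $v$. On the cycle, $t^k$ acts as the identity, so $M_t$ is generated over $\langle t,t^{-1}\rangle$ by a single cycle vertex $v$ with the relation $t^k\cdot(v/1) = v/1$, i.e. $(t^k-1)$ acts trivially; this is precisely $\langle t,t^{-1}\rangle/\langle t^k,t^{-k}\rangle$, a $\mathbb Z/k$--orbit with zero adjoined. Hence $\widetilde M\vert_{U_1\cap U_2}\simeq\mc C_k$.

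\textbf{Main obstacle.} The only genuinely delicate point is part (2): one must verify carefully that in the type-2 case \emph{every} leaf and every tree vertex really does get absorbed onto the single infinite tail after localizing, i.e. that the structural description of type 2 (``every directed path starting at a leaf eventually passes through a common vertex $v$'', from the classification theorem) is exactly what guarantees $M_t$ is cyclic and free of rank one; and symmetrically in part (3) that torsion on attached trees becomes trivial after inverting $t$ while the cyclic relation $t^k = 1$ survives and generates the full annihilator. Both reduce to the elementary fact that localizing at $t$ identifies $m$ with $t^{-n}(t^n m)$, so I would phrase the argument around ``follow the out-edges'': the localized module sees only the eventual behavior of each orbit, which by the classification is either trivial (type 1), a single bi-infinite line (type 2), or a $k$-cycle (type 3). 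No step requires more than routine checking once this principle is stated.
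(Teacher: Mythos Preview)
Your proposal is correct and is exactly the sort of computation the paper has in mind: the paper states this lemma without proof, calling it ``immediate,'' so your module-level localization argument is precisely the implicit reasoning being invoked. The organizing principle you isolate --- that $M_t$ sees only the eventual behavior of each $t$-orbit (death at $*$, infinite tail, or $k$-cycle) --- is the right one, and each of the three cases follows from it by the routine checks you describe. Two cosmetic remarks: in part~(2) your self-correction (``actually the tree part is \emph{not} killed\ldots'') is the right statement, so just say that from the start; and in part~(3) the phrase ``$(t^k-1)$ acts trivially'' is additive shorthand that does not parse in the monoid setting --- write instead that $t^k$ acts as the identity on the cycle, whence the annihilator relation in $M_t$.
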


Note that: 
\begin{itemize}
\item the automorphism group of $\mc{L}$ on $U_1 \cap U_2$ is $\mathbb{Z}$. We denote by $\phi_n: \mc{L} \rightarrow \mc{L}$ the automorphism of $L$ induced by multiplication by $t^n$ on $\langle t, t^{-1} \rangle$. 
\item the automorphism group of $\mc{C}_k$ is $\mathbb{Z}/ k \mathbb{Z}$.. We denote by $\psi_{m}: \mc{C}_k \rightarrow \mc{C}_k $ the automorphism of $\mc{C}_k$ induced by multiplication by $t^m$ on $\langle t, t^{-1} \rangle /  \langle t^k, t^{-k} \rangle$ (note that $\psi_m$ only depends on $m \; (mod \; k)$). 
\end{itemize}

We thus come to our main result in this section. 

\begin{theorem} \label{P1_classification}
Let $\F$ be an indecomposable coherent sheaf on $\mathbb{P}^1$. Then $F$ is described by one of the following defining triples $(\F', \F'', \varphi)$, where $\F'$ and $\F''$ are indecomposable coherent sheaves on $U_1$ and $U_2$ respectively, and
$$\varphi: \F' \vert_{U_1 \cap U_2} \simeq \F'' \vert_{U_1 \cap U_2}$$ is a gluing isomorphism. 
\begin{enumerate}
\item $(\F', 0, 0)$, where $\F'$ is of type 1.
\item $(0', \F'', 0)$, where $\F'$ is of type 1.
\item $(\F', \F'', \phi)$ where $\F'$ and $\F''$ are of type 2. After choosing isomorphisms $\F'_{U_1 \cap U_2} \simeq \mc{L}$, $\F''_{U_1 \cap U_2} \simeq \mc{L}$, $\phi$ may be identified with $\phi_n$ for some $n \in \mathbb{Z}$. 
\item $(\F', \F'', \psi)$ where $\F'$ and $\F''$ are of type 3. After choosing isomorphisms $\F'_{U_1 \cap U_2} \simeq \mc{C}_k$, $\F''_{U_1 \cap U_2} \simeq \mc{C}_k$, $\psi$ may be identified with $\psi_m$ for some $m \in \mathbb{Z}/k \mathbb{Z}$.  
\end{enumerate}
$\F$ is torsion in the first two cases, and torsion-free in the last two. 
\end{theorem}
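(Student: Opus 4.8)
The plan is to leverage the three structural lemmas immediately preceding the theorem: that a coherent sheaf on $\mathbb{P}^1$ is the same data as a triple $(\F',\F'',\varphi)$, that indecomposable coherent sheaves on each $\mathbb{A}^1$ are classified by graphs of type 1, 2, or 3, and that restriction to $U_1 \cap U_2$ sends type 1 to $0$, type 2 to $\mc{L}$, and type 3 to $\mc{C}_k$. First I would reduce to understanding which gluings $\varphi$ are possible and when two triples give isomorphic sheaves. By Remark \ref{subobjects} (property (4) of $\Amod$), any subsheaf of a direct sum splits as a direct sum of its intersections with the summands; I would use this to show that if $\F$ is indecomposable then both $\F'$ and $\F''$ must be indecomposable on their respective charts — otherwise a decomposition of $\F'$ (compatible with the gluing, which one arranges since $\varphi$ is an isomorphism respecting the orbit decomposition on $U_1\cap U_2$) would descend to a decomposition of $\F$. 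So $\F'$ is of type 1, 2, or 3, and likewise $\F''$.

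Next I would match types across the gluing. Since $\varphi$ is an isomorphism, $\F'\vert_{U_1\cap U_2}$ and $\F''\vert_{U_1\cap U_2}$ must be isomorphic coherent sheaves on $\mspec(\langle t,t^{-1}\rangle)$; by the restriction lemma this forces the pair of types to be (1,1) giving the zero sheaf on the overlap, or (2,2) giving $\mc{L}$ on both sides, or (3,3) with the \emph{same} cycle length $k$ giving $\mc{C}_k$ on both sides. Mixed pairs are impossible because $0$, $\mc{L}$, and the various $\mc{C}_k$ are pairwise non-isomorphic (distinct orbit types of $\mathbb{Z}$-sets). In the (1,1) case the gluing datum over the zero sheaf is unique, and the sheaf is supported away from the generic point on one chart or the other; but an indecomposable such sheaf, having connected graph, lives entirely on one of $U_1$, $U_2$ — giving cases (1) and (2). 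This is where I would be slightly careful: a type 1 graph attached at $0$ versus at $\infty$ are the two separate cases, and one should note that a triple $(\F',0,0)$ with $\F'$ type 1 really is indecomposable, since the graph is connected and adding the zero sheaf on the other chart contributes nothing.

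Finally, in cases (2,2) and (3,3) I would invoke the automorphism computations in the bulleted remarks: after fixing isomorphisms $\F'\vert_{U_1\cap U_2}\simeq \mc{L}$ and $\F''\vert_{U_1\cap U_2}\simeq\mc{L}$ (resp. $\simeq \mc{C}_k$), the gluing isomorphism $\varphi$ differs from the identity by an automorphism of $\mc{L}$ (resp. $\mc{C}_k$), hence is $\phi_n$ for some $n\in\mathbb{Z}$ (resp. $\psi_m$ for some $m\in\mathbb{Z}/k\mathbb{Z}$). This gives the stated normal form. The torsion assertion follows from the remark after the $\mathbb{A}^1$-classification theorem: type 1 pieces are torsion and type 2 or 3 pieces are torsion-free on each chart, and torsion (resp. torsion-freeness) is checked on an affine cover, here $\{U_1, U_2\}$. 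The main obstacle I anticipate is the first reduction — rigorously showing that indecomposability of $\F$ forces indecomposability of both $\F'$ and $\F''$ — since one must check that a splitting of $\F'$ can always be chosen compatibly with $\varphi$ so as to glue; this uses that $\varphi$, being an isomorphism of $\mathbb{Z}$-sets on the overlap, must carry the orbit decomposition of $\F'\vert_{U_1\cap U_2}$ to that of $\F''\vert_{U_1\cap U_2}$, together with the splitting property from Remark \ref{subobjects}.
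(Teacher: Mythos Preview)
The paper gives no explicit proof of this theorem; it is stated immediately after the lemmas classifying indecomposable $\langle t\rangle$--modules, computing their restrictions to $U_1\cap U_2$, and describing $\on{Aut}(\mc{L})$ and $\on{Aut}(\mc{C}_k)$, and is evidently meant to be read as a direct consequence of that setup. Your proposal spells out precisely the argument the paper leaves implicit, and the key step you isolate---that an isomorphism of pointed $\mathbb{Z}$--sets on $U_1\cap U_2$ must carry orbits to orbits, so that the type 2/3 indecomposable summands of $\F'$ and $\F''$ are matched in pairs by $\varphi$---is exactly what is needed.

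One small wrinkle in presentation: your claim that ``both $\F'$ and $\F''$ must be indecomposable'' is in tension with cases (1) and (2) of the statement, where one of them is the zero sheaf. The cleaner formulation is that each of $\F',\F''$ is either zero or indecomposable; and if both are nonzero of type 1, then since both restrict to zero on the overlap the glued sheaf is $\iota_{0*}\F'\oplus\iota_{\infty*}\F''$, which is decomposable. You essentially reach this conclusion in your handling of the ``(1,1) case,'' so this is only a matter of phrasing, not a gap.
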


\begin{example} \label{firstexample}
 Let $M$ be the $\langle t \rangle$--module on two generators with $\Gamma_M$ as shown:
 
 \begin{center}
\begin{tikzpicture}
\draw [ultra thick,->] (0,0) -- (0.9,0.9);
\draw [fill] (0,0) circle [radius=0.1];
\draw [fill] (1,1) circle [radius=0.1];
\draw [ultra thick,->] (2,0) -- (1.1,0.9);
\draw [fill] (2,0) circle [radius=0.1];
\draw [ultra thick,->] (1,1) -- (1,1.9);
\draw  [fill] (1,2) circle [radius=0.1];
\draw [ultra thick,dotted,->] (1,2) -- (1,3);
\end{tikzpicture}
\end{center}
 Let $\F=\wt{M}$ be the corresponding coherent sheaf on $\mathbb{A}^1$, and $\F_1, \F_2$ sheaves on $U_1$ and $U_2$ respectively isomorphic to $\F$. Denote the generators on $U_1$ by $a_0, b_0$ and those on $U_2$ by $a_{\infty}, b_{\infty}$. Consider the coherent sheaf $(\F_1, \F_2, \psi)$, where $\psi: \F_1 \rightarrow \F_2$ identifies the images of $a_0, b_0$ with $a_{\infty}, b_{\infty}$ over $U_1 \cap U_2$. Given a field $K$, we have
 \[
 \F_{K} \cong \mc{O}_{\mathbb{P}^1} \oplus K_{0} \oplus K_{\infty}
 \]
 where $K_0$ and $K_{\infty}$ are torsion sheaves supported at $0$ and $\infty$ isomorphic in local coordinates to $K[ t ] / (t)$ and $K[ t^{-1}]  / (t^{-1})$ respectively. Denoting $s \in \Gamma(\mathbb{P}^1, \F)$ by the pair $(s\vert_{U_1}, s\vert_{U_2})$, the set of global sections consists of $4$ non-zero elements 
 $$(a_0,a_{\infty}), (a_0,b_{\infty}), (b_0,a_{\infty}), (b_0,b_{\infty}), $$
  whereas $H^0 (\mathbb{P}^1_K, \F_K)$ is $3$-dimensional. The base change map 
 \[
 \phi_K : K[ \Gamma (\mathbb{P}^1, \F) ]  \rightarrow \Gamma (\mathbb{P}^1_K, \F_K) 
 \]
 is easily seen to be surjective, with kernel spanned by $$ (a_0,a_{\infty}) - (a_0,b_{\infty}) + (b_0,a_{\infty}) - (b_0,b_{\infty}). $$ 
 
\end{example}

\medskip

\begin{example} \label{secondexample}
Let $P_n$ be the $\langle t \rangle$--module on two generators with $\Gamma_{P_n}$ consisting of an infinite ladder with one additional vertex attached via an incoming edge to the $n$-th vertex from the bottom. I.e. we have $t^n \cdot a = t \cdot b$ as shown:
 
 \begin{center}
\begin{tikzpicture}
\draw [ultra thick,->] (0,0) -- (0,0.9);
\draw [fill] (0,0) circle [radius=0.1];
\draw [fill] (0,1) circle [radius=0.1];
\draw [ultra thick,->] (0,1) -- (0,1.9);
\draw [fill] (0,2) circle [radius=0.1];
\draw [ultra thick,dotted,->] (0,2) -- (0,2.9);
\draw [ultra thick,dotted,->] (0,3) -- (0,4);
\draw  [fill] (0,3) circle [radius=0.1];
\draw [ultra thick,->] (1,2) -- (0.1,2.9);
\draw  [fill] (1,2) circle [radius=0.1];
\node at (0,-0.4) {a};
\node at (1,1.6) {b};
\end{tikzpicture}
\end{center}

Let $\F_1 = \wt{P_n}$ on $U_1$ with generators labeled $a_0, b_0$ and $\F_2 = \wt{\langle t^{-1} \rangle}$ on $U_2$, with generator labeled $c_{\infty}$. Consider the coherent sheaf $ \G_n = (\F_1, \F_2, \rho)$, where $\rho$ identifies $a_0$ with $c_{\infty}$ over $U_1 \cap U_2$. The global section $(a_0, c_{\infty})$ generates a sub-module of $\G_n$ isomorphic to $\mc{O}_{\mathbb{P}^1}$, with $\G_n / \mc{O}_{\mathbb{P}^1}$ isomorphic to  the torsion sheaf $\mc{T}=\langle t \rangle / (t)$ (note that the round bracket denotes the \emph{ideal} generated by $t$). We thus obtain infinitely many non-isomorphic non-split extensions
\[
0 \rightarrow \mc{O}_{\mathbb{P}^1} \rightarrow \G_n \rightarrow \mc{T} \rightarrow 0
\]
Upon base-change to a field $K$, $(\G_n)_K \cong \mc{O}_{\mathbb{P}^1} \oplus K_0 $, and all of these short exact sequences become isomorphic to the split extension
\[
0 \rightarrow \mc{O}_{\mathbb{P}^1} \rightarrow  \mc{O}_{\mathbb{P}^1} \oplus K_0 \rightarrow K_0 \rightarrow 0.
\]

\end{example}

\newpage

\address{\tiny INSTITUTO NACIONAL DE MATEMATICA PURA E APLICADA, ESTRADA DONA CASTORINA 110, RIO
DE JANEIRO, BRAZIL} \\
\indent \footnotesize{\email{oliver@impa.br}}

\address{\tiny DEPARTMENT OF MATHEMATICS AND STATISTICS, BOSTON UNIVERSITY, 111 CUMMINGTON MALL, BOSTON} \\
\indent \footnotesize{\email{szczesny@math.bu.edu}}


\begin{thebibliography}{99}


\bibitem{CC2} 	Connes, Alain; Consani, Caterina.  Schemes over $\fun$ and zeta functions. Compos. Math., 146 (6), 1383-1415, 2010.


\bibitem{CHWW} Cortinas, G.; Hasemeyer C.; Walker M.; Weibel C. Toric varieties, monoid schemes, and \emph{cdh} descent. Preprint {\tt arXiv:1106.1389}.

\bibitem{CLS} Chu C.; Lorscheid O.; Santhanam R. Sheaves and K-theory for F1-schemes.  Adv. Math. 229, no. 4,2239-2286, 2012.

\bibitem{D} Deitmar, Anton. 
Schemes over $\Bbb F_1$. Number fields and function fields---two parallel worlds, 87--100, 
Progr. Math., 239, Birkh\"auser Boston, Boston, MA, 2005. 

\bibitem{D2}  Deitmar, Anton.  $\Bbb F_1$-schemes and toric varieties. Beitr\"age Algebra Geom. 49 (2008), no. 2, 517--525.

\bibitem{Du} Durov, N. New approach to Arakelov geometry. Preprint {\tt arXiv: 0704.2030}. 

\bibitem{H} Haran, Shai M.J. Non-Additive Prolegomena (to any future Arithmetic that will be able to present itself as a Geometry). Preprint {\tt arXiv:0911.3522 }. 

\bibitem{H2} Haran, Shai M.J. New foundations for geometry. Preprint {\tt 1508.04636}.


\bibitem{LPL2} L\'opez Pe\~na, Javier; Lorscheid, Oliver. Mapping F1-land: an overview of geometries over the field with one element. Noncommutative geometry, arithmetic, and related topics, 241-265, Johns Hopkins Univ. Press, Baltimore, MD, 2011.

\bibitem{LPL3}  L\'opez Pe\~na, Javier; Lorscheid, Oliver. Projective geometry of blueprints.   C. R. Math. Acad. Sci. Paris 350, no. 9-10, 455-458, 2012.



\bibitem{Lor3} Lorscheid, Oliver. A blueprinted view on F1-geometry. To appear as a chapter of the ECM monograph Absolute Arithmetic and F1-geometry.

\bibitem{KapS} Kapranov, M; Smirnov, A. Cohomology, determinants, and reciprocity laws: number field case. Unpublished.  

\bibitem{Kato} Kato, K. Toric singularities. Amer. J. of Math. { 116} (1994), 1073-1099. 

\bibitem{Sou} C. Soul\'e. Les vari\'et\'es sur le corps \`a un \'el\'ement. Moscow Math. J. 4, (2004), 217-244.

\bibitem{Sz1} Szczesny, M. On the Hall Algebra of Coherent Sheaves on $\mathbb{P}^1$ over $\fun$.  Journal for Pure and Applied Algebra 216 no. 2 (2012).

\bibitem{Sz2} Szczesny, M. On the Hall algebra of semigroup representations over $\fun$.
Math. Z. 276 (2014), no. 1-2, 371-386.

\bibitem{Th1} Thas, K.  Notes on $\fun$, I. Combinatorics of $D_0$-schemes and $\fun$-geometry. Preprint.

\bibitem{Th2} Thas, K.  The combinatorial-motivic nature of $\mathbb{F}_1$-schemes. Preprint {\tt arXiv:1406.5447}

\bibitem{TV} To\"en, Bertrand; Vaqui\'e, Michel. Au-dessous de ${\rm Spec}\,\Bbb Z$. J. K-Theory 3 (2009), no. 3, 437--500. 


\end{thebibliography}
\end{document}